\renewcommand{\ALG@name}{Process} 
\algrenewcommand\algorithmicrequire{\textbf{Input:}}
\algrenewcommand\algorithmicensure{\textbf{Output:}}
\def\ps@pprintTitle{%
   \let\@oddhead\@empty
   \let\@evenhead\@empty
   \let\@oddfoot\@empty
   \let\@evenfoot\@oddfoot
}
\theoremstyle{plain}
\newtheorem{theorem}{Theorem}
\newtheorem{proposition}[theorem]{Proposition}
\newtheorem{lemma}[theorem]{Lemma}
\newtheorem{corollary}[theorem]{Corollary}
\newtheorem{definition}[theorem]{Definition}
\newtheorem{example}[theorem]{Example}
\theoremstyle{remark}
\newtheorem{remark}[theorem]{Remark}
\newcommand{\R}{\mathbb{R}}
\newcommand{\diam}{\mathrm{diam}}
\newcommand{\F}{\mathbb F} 
\newcommand{\dist}{\mathrm{dist}}
\newcommand{\ad}{\mathrm{ad}}
\let\originalleft\left
\let\originalright\right
\renewcommand{\left}{\mathopen{}\mathclose\bgroup\originalleft}
\renewcommand{\right}{\aftergroup\egroup\originalright} 
\begin{document}

\begin{frontmatter}

\title{Diameter of Commuting Graphs of Lie Algebras}

\author[VNU1,VNU2]{Hieu V. Ha}
\ead{hieuhv@uel.edu.vn}

\author[HSU]{Hoa D. Quang \corref{mycorrespondingauthor}}
\cortext[mycorrespondingauthor]{Corresponding author}
\ead{hoa.duongquang@ufm.edu.vn}

\author[VNU1,VNU2]{Vu A. Le}
\ead{vula@uel.edu.vn}

\author[DTU]{Tuyen T. M. Nguyen}
\ead{ntmtuyen@dthu.edu.vn}

\address[VNU1]{
University of Economics and Law, Ho Chi Minh City, Vietnam}

\address[VNU2]{Vietnam National University, Ho Chi Minh City, Vietnam}

\address[HSU]{University of Finance - Marketing, Ho Chi Minh City, Vietnam}

\address[DTU]{Dong Thap University, Dong Thap Province, Vietnam}

\begin{abstract}
    In this paper, we study the connectedness of the commuting graph of a general Lie algebra and provide a process to determine whether the commuting graph is connected or not, as well as to compute an upper bound for its diameter. In addition, we will examine the connectedness and diameter of the commuting graphs of some remarkable classes of Lie algebras, including: (1) a class of Lie algebras with one- or two-dimensional derived algebras; and (2) a class of solvable Lie algebras over the real field of dimension up to $4$.
\begin{keyword}
Connected, Lie algebras, Commuting graphs, Diameter.
\end{keyword} 
\end{abstract}

\end{frontmatter}

\section{Introduction}
The connections between graph theory and other mathematical structures, such as group theory and ring theory, have become an important problem in recent years. There are many papers on investing in these connections (for example, see \cite{Anderson2008, Anderson1999, Akbari2004, Akbari2008, Akbari2006}). Let $\cal R$ be a noncommutative ring, and let $Z(\cal R)$ denote the center $\cal R$, that is, $Z(\cal R)$ is the set of elements $x\in \cal R$ such that $xy=yx$ for every $y\in \cal R$. Then the commuting graph of $\cal R$ is defined as a simple undirected graph, denoted by $\Gamma(\cal R)$, which has the vertex set $V=\mathcal R \setminus Z(\mathcal R)$ in which two vertices $x,\, y \in V$ are adjacent if and only if $xy=yx$ \cite{Akbari2004}. For this graph, its diameter, denoted by $\diam(\Gamma(\mathcal R))$, is defined as the supremum of the distances of any two distinct vertices $u$ and $v$ in $V$. 
In \cite{Akbari2004, Akbari2008, Akbari2006, Dorbidi2024, Miguel2016, Nam2024}, the connectedness and diameter of the commuting graphs of some important subsets of the ring $M_n(\mathbb F)$ of all $n\times n$ matrices were studied. 
Note that the matrix algebra $M_n(\mathbb{F})$ is also a Lie algebra in which the Lie bracket is defined by $[A,B]=AB-BA$. In \cite{Wang2017}, Wang and Xia extended these terms to Lie algebras and obtained the diameters of the commuting graphs of simple, finite-dimensional Lie algebras over an algebraically closed field of characteristic 0. More precisely, let $\mathcal L$ be such a Lie algebra of rank $l$. If $l=1$ then the commuting graph of $\mathcal L$ is disconnected. Otherwise, $\Gamma(\mathcal L)$ is connected and its diameter is bounded below and above by 3 and 6, respectively.

In this paper, we extend these terms to more general classes of Lie algebras. In particular, we prove that the diameter of the commuting graph of any direct sum of two non-commutative Lie algebras is upper bounded by $3$ and give an example to prove that this bound is sharp (Section~\ref{section:general}). In this section, we also give some remarkable conditions a Lie algebra to have a connected commuting graph. Using these conditions, we obtain the connectedness of the commuting graphs of all Lie algebras having one- or two-dimensional derived algebras (Section~\ref{sec:one-two-dimension}). We also propose a new method, which is written as a process, to obtain the connectedness and an upper bound for the diameter of the commuting graph of a general Lie algebra (Section~\ref{sec:algorithm}). Applying this method, we determine the connectedness and diameter of all real solvable Lie algebras of dimension up to $4$ (Section~\ref{sec:<=5}).

\section{Preliminaries}\label{sec:preliminaries}
In this paper, a graph $\Gamma = (V,E)$ is always a simple undirected graph with vertex set $V$ and edge set $E$. For two distinct vertices $u, v \in V$, we will denote the edge joined $u$ and $v$ by $\{u,v\}$. 
If such an edge exists, we say that $u$ and $v$ are {\it adjacent}.
Otherwise, two vertices $u$ and $v$ are said to be non-adjacent. 
A {\it path} (joining $v_1$ and $v_k$) is a sequence of pairwise distinct vertices $v_1, v_2, \ldots, v_k$ such that $\{v_i,v_{i+1}\} \in E$ for every $i=1, 2, \ldots, k-1$. This path will be written as $\{v_1,v_2, \ldots,v_k\}$. The {\it length} of a path $P$ is the number of edges in $P$. The {\it distance} between two vertices $u$ and $v$, denoted $\dist(u,v)$, is the length of the shortest path joining them.  The graph $\Gamma$ is said to be {\it connected} if every pair of distinct vertices is joined by a path. Otherwise, it is disconnected.


\begin{definition}\cite{Chartrand2004}
    The diameter of a connected graph $\Gamma$, denoted $\diam(\Gamma)$, is defined by
    \begin{equation}
        \diam(\Gamma) = \sup\{\dist(u,v) \,|\, u\neq v\}.
    \end{equation}
\end{definition}
\noindent It can be seen directly from the above definition that $\diam(\Gamma)=1$ if and only if $\Gamma$ is a complete graph, i.e., every pair of distinct vertices is adjacent. Moreover, it should be noted that $\diam(\Gamma) < \infty$ if $\Gamma$ is a finite graph. Otherwise, if $\Gamma$ is an infinite graph, its diameter is not necessary to be less than infinity.

    Let $\mathcal L$ be a Lie algebra over a field $\mathbb F$. Then the {\it center} of $\cal L$, denoted by $Z(\cal L)$, is defined as the set of all elements $x\in \cal L$ such that $[x,\mathcal{L}]=0$, i.e., 
\begin{equation*}
    Z(\mathcal{L}):=\{x \in \mathcal{L} \, | \, [x,y]=0 \text{ for every } y \in \cal L \}.
\end{equation*}
It is clear that
$Z(\cal L)$ is an ideal of $\cal L$. 
By definition, the direct sum of two Lie algebras ${\cal L}_1$, ${\cal L}_2$, denoted by ${\cal L}_1 \oplus {\cal L}_2$, is the direct sum of two vector spaces ${\cal L}_1$ and ${\cal L}_2$ equipped the following Lie bracket:
\begin{equation}
    [x_1 + x_2, y_1 + y_2] := [x_1, y_1]  +  [x_2, y_2], \, \forall x_1, y_1 \in {\cal L}_1; \, \forall x_2, y_2 \in {\cal L}_2.
\end{equation}
A Lie algebra $\cal L$ is said to be {\it decomposable} if $\mathcal L$ can be decomposed into a direct sum of two non-trivial Lie subalgebras. Otherwise, $\mathcal L$ is said to be {\it indecomposable}. 

Besides, it is well-known that a Lie algebra of dimension $n$ can be defined via its structure constants. More precisely, a Lie algebra of dimension $n$ over a field $\mathbb F$, spanned by $\{e_1, e_2, \ldots, e_n\}$, can be defined directly by structure constants $\{(c_{ij})^k \mid 1 \leq i < j \leq n, 1 \leq k \leq n\}$ as follows:
\begin{equation}
    [e_i,e_j] = c_{ij}^1 e_1 + c_{ij}^2 e_2 + \cdots + c_{ij}^n e_n.
\end{equation}
Of course, the structure constants are not completely arbitrary. They must satisfy a relationship that is directly deduced from the Jacobi identity for all the triples $(i, j, k),\, 1 \leq i < j < k \leq n$ \cite{Kirillov1976}. 
Therefore, 
in this paper, we will use the following notation
\begin{equation}
    \mathcal L = \left< e_1, e_2, \ldots, e_n\right> \colon [e_i,e_j] = c_{ij}^1 e_1 + c_{ij}^2e_2+\cdots +c_{ij}^ne_n;\, 
     (i,j) \in M 
\end{equation}
to indicate a Lie algebra of dimension $n$ spanned by $\{e_1, \ldots, e_n\}$ and having their non-trivial Lie brackets: 
\begin{equation}
     [e_i,e_j] = c_{ij}^1 e_1 + c_{ij}^2e_2+\cdots +c_{ij}^ne_n; \, \forall 
     (i,j) \in M. 
\end{equation}
Here, $M$ is some nonempty set of $\{1, \cdots , n \} \times \{1, \cdots , n \}$ such that if $(i, j) \in M$ then $i < j$.
We refer the reader to \cite{Chartrand2004} for more details on graph theory and to \cite{Snob} for more details about Lie algebras.



We next introduce a basic concept of this paper, that is, the {\it commuting graph of a noncommutative Lie algebra}.

\begin{definition} \cite{Wang2017}
    Let $\cal L$ be a noncommutative Lie algebra over a field $\mathbb{F}$ with center $Z(\cal L)$. Then the commuting graph $\Gamma(\cal L)$ $= (V_{\cal L}, E_{\cal L})$ of $\cal L$ is the simple undirected graph which have
    \begin{enumerate}
        \item[i)] the vertex set $V_{\cal L}:= \mathcal{L}\setminus Z(\cal L)$, and
        \item[ii)] the edge set $E_{\cal L}:= \left\{ \{x, y\} \,|\, [x,y]=0 \right\}$.
    \end{enumerate}
\end{definition}
If $\cal S$ is a nonempty subset of $V_{\cal L}$ then we will denote by $\Gamma(\mathcal S)=(V_{\mathcal S},E_{\cal S})$ the subgraph of $\Gamma(\cal L)$ induced by $\cal S$. That is,
\begin{equation}
    \left\{
        \begin{aligned}
            & V_{\cal S}:= \mathcal{S}
            \\ 
            & E_{\cal S}:= \left\{\{x, y\} \in E_{\cal L} \, | \, x, y \in V_{\cal S}\right\}.
        \end{aligned}
    \right.
\end{equation}
Note that the subgraph of $\Gamma(\mathcal L)$ induced by any nonempty subset of $\{\lambda s \mid \lambda \in \mathbb F, s \in V_{\cal L}\}$ is connected.
\begin{remark}
    If the commuting graph of a noncommutative Lie algebra $\mathcal L$ is connected, then its diameter is not less than $2$, i.e., 
    \begin{equation}
        \diam(\Gamma(\mathcal L)) \geq 2.
    \end{equation}
   In the rest of the paper, all considering Lie algebras are always assumed to be noncommutative and therefore, the diameters of their commuting graphs, if connected, are not less than $2$.
\end{remark}

It is worth pointing out that the commuting graphs of two isomorphic Lie algebras are obviously isomorphic. However, the converse is not true in general. This is illustrated by the following example.

\begin{example} 
Let $\mathcal L_1$ and $\mathcal L_2$ be the two 3-dimensional Lie algebras defined over the real field $\mathbb R$ as follows:
     \begin{itemize}
        \item $\mathcal{L}_1 = \left< x_1, x_2, x_3\right>: [x_3,x_1] = x_1, \, [x_3,x_2]=x_2;$
        \item $\mathcal{L}_2 = \left< y_1, y_2, y_3\right>: [y_3,y_1] = y_1, \, [y_3,y_2] = y_1 + y_2.$
    \end{itemize}
According to the classification in \cite[Chapter 16]{Snob}, $\mathcal{L}_1$ is not isomorphic to $\mathcal{L}_2$. Now, it is easy to see that 
$
    Z(\mathcal L_1)=Z(\mathcal L_2)=\{0\}.
$
The vertex sets of $\Gamma(\mathcal L_1)$ and $\Gamma(\mathcal L_2)$ are therefore determined by:
\begin{align*}
    V_{\mathcal L_1}
    = \left\{ a_1x_1+a_2x_2+a_3x_3 \, | \, (a_1, a_2, a_3) \neq (0,0,0) \right\}, \\
    V_{\mathcal L_2}
    = \left\{ a_1y_1+a_2y_2+a_3y_3 \, | \, (a_1, a_2, a_3) \neq (0,0,0) \right\}.
\end{align*}
Let $f:V_{\mathcal L_1}
\rightarrow V_{\mathcal L_2}$
be a function defined by:
\begin{equation*}
    f(a_1x_1+a_2x_2+a_3x_3)=a_1y_1+a_2y_2+a_3y_3.
\end{equation*}
Then $f$ is a bijection.
Moreover, it is elementary to check that
\begin{equation*}
\begin{array}{ll}
    & [f(a_1x_1+a_2x_2+a_3x_3),f(b_1x_1+b_2x_2+b_3x_3)] = 0 \\
    \iff & 
    [a_1y_1+a_2y_2+a_3y_3,b_1y_1+b_2y_2+b_3y_3] = 0 \\
    \iff &
    a_1b_3-a_3b_1=a_2b_3-a_3b_2 = 0 \\
    \iff &
    [a_1x_1+a_2x_2+a_3x_3,b_1x_1+b_2x_2+b_3x_3] = 0.
\end{array}
\end{equation*}
Therefore, $f$ is an isomorphism of graphs $\Gamma(\mathcal L_1)$ and $\Gamma(\mathcal L_2)$.
\end{example}

Moreover, it is worth noting that the connectedness of the commuting graph of a Lie algebra substantially depends on the ground field. 
The example that follows demonstrates this.
\begin{example} 
    Let us consider a Lie algebra $\cal L$ defined over a field $\mathbb F$ by the following structure constants: \begin{equation}\label{Equationn63}
        \mathcal L = \left< e_1, e_2, \ldots,e_6\right> \colon  
            [e_1,e_3] = [e_2,e_4]=e_5, \, [e_1,e_4]=-[e_2,e_3]=-e_6.
    \end{equation}
    The center of $\cal $ is easily determined as follows $
        Z(\mathcal L) = \left< e_5, e_6 \right>.
    $
   We shall show that in the case of the real field $\mathbb F = \mathbb R$, $\Gamma(\mathcal L)$ is disconnected, whereas in another case, $\mathbb F = \mathbb F_2$, is connected.  Moreover, in the case where $\mathbb F = \mathbb F_2$, we will show that
    \begin{equation}
        \diam\left(\Gamma(\mathcal L)\right) = 3.
    \end{equation}
    Note that if the ground field is $\mathbb R$, the above Lie algebra $\mathcal L$ is isomorphic to $\mathfrak{n}_{6,3}$ classified in \cite[Chapter 19]{Snob}. From the structure constants of $\mathcal L$, we easily observe that
     \begin{align}
        [a_1e_1+a_2e_2, b_1e_1+b_2e_2+b_3e_3+b_4e_4] = 0 
        \iff  a_1b_3+a_2b_4=a_2b_3-a_1b_4=0 \label{Equationeq5}
    \end{align}
    We now consider the following two cases depending on the ground field as follows.
    \\[5pt]
    \noindent\textbf{Case 1.} $\mathbb F = \mathbb R$. Let $S= \{a_1e_1+a_2e_2+a_5e_5+a_6e_6\mid (a_1,a_2)\neq (0,0)\}$. Then $S$ is a subset of the vertex set of $\Gamma(\cal L)$. It is easy to see that for any $a_1, a_2 \in \mathbb R$ such that $a_1^2+a_2^2\neq 0$, we have
    \begin{equation}
        a_1b_3+a_2b_4=a_2b_3-a_1b_4=0 \iff b_3=b_4=0.
    \end{equation}
    Therefore, the subgraph of $\Gamma(\mathcal L)$ induced by $S$ is a connected component of $\Gamma(\mathcal L)$. It follows that $\Gamma(\mathcal L)$ is disconnected, as required.
    \\[5pt]
    \noindent \textbf{Case 2.} $\mathbb F = \mathbb F_2$. Then, by direct calculations, we may verify that the graph in \figurename~\ref{fig.1} is a subgraph of $\Gamma(\mathcal L)$.
    \begin{figure}
    \centering
        \begin{tikzpicture}[auto=center,every node/.style={,draw=black,fill=white}]
        \node (a1) at (-2,1){$e_1+e_2$};
        \node (a2) at (2,1){$e_3+e_4$};
        \node (a3) at (0,-1){$e_1+e_2+e_3+e_4$};
        \node (a11) at (-5,0){$e_2$};
        \node (a12) at (-5,2){$e_1$};
        \node (a13) at (-5,3){$e_1+e_3+e_4$};
        \node (a14) at (-5,-1){$e_2+e_3+e_4$};
        \node (a21) at (5,0){$e_4$};
        \node (a22) at (5,2){$e_3$};
        \node (a23) at (5,3){$e_1+e_2+e_3$};
        \node (a24) at (5,-1){$e_1+e_2+e_4$};
        \node (a31) at (-5,-3){$e_1+e_3$};
        \node (a32) at (-2,-3){$e_1+e_4$}; 
        \node (a33) at (2,-3){$e_2+e_3$};
        \node (a34) at (5,-3){$e_2+e_4$};
        \draw (a1) -- (a2) -- (a3) -- (a1);
        \draw (a1) -- (a11); \draw (a1) -- (a13);
        \draw (a1) -- (a12); \draw (a1) -- (a14);
        \draw (a2) -- (a21); \draw (a2) -- (a23);
        \draw (a2) -- (a22); \draw (a2) -- (a24);
        \draw (a3) -- (a31); \draw (a3) -- (a33);
        \draw (a3) -- (a32); \draw (a3) -- (a34);
        \draw (a13.south west) .. controls (-7,1) .. (a14.north west);
        \draw (a11.south west) .. controls (-6,1) .. (a12.north west);
        \draw (a23.south east) .. controls (7,1) .. (a24.north east);
        \draw (a21.south east) .. controls (6,1) .. (a22.north east);
        \draw (a31) .. controls (0,-5) .. (a34);
        \draw (a32) .. controls (0,-4) .. (a33);
    \end{tikzpicture}
    \caption{A subgraph of the commuting graph of the Lie algebra defined in Equation~\eqref{Equationn63} over $\mathbb F_2$.}\label{fig.1}
    \end{figure}
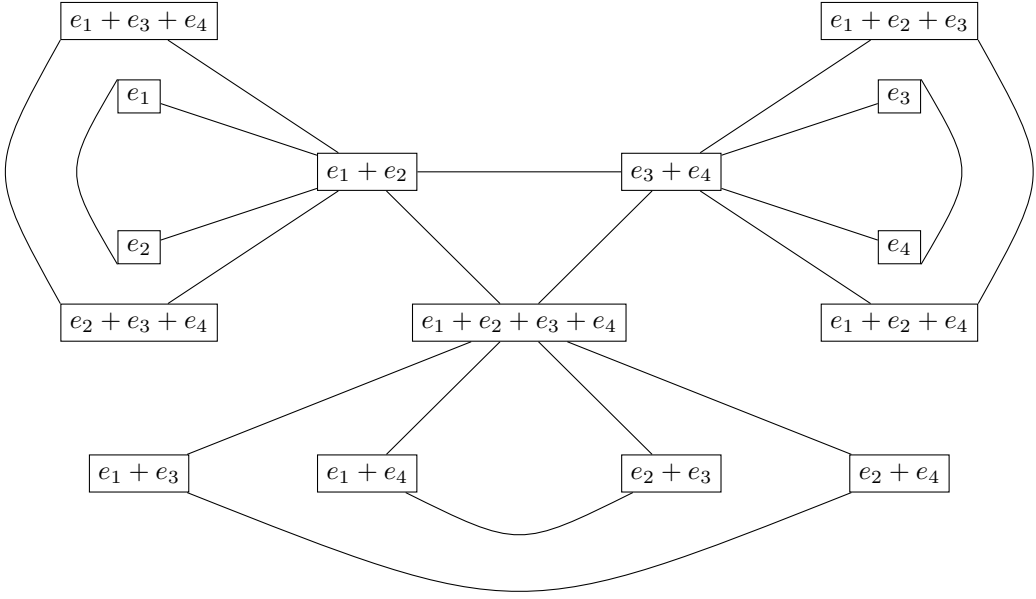
    Since the vertex set of $\Gamma(\mathcal L)$ is 
    \begin{equation}
        V_{\cal L} = \left\{ a_1e_1 + a_2e_2 + a_3e_3 + a_4e_4 + a_5e_5 + a_6e_6 \mid (a_1,a_2,a_3,a_4) \in \mathbb F_2^4 \setminus (0,0,0,0) \right\},
    \end{equation}
we can conclude from \figurename~\ref{fig.1} that $\Gamma(\mathcal L)$ is connected and 
\begin{equation}\label{Equation<=3}
    \diam(\Gamma(\mathcal L)) \leq 3.
\end{equation}
Furthermore, it can be verified that
\begin{equation}
    \left\{ 
    \begin{aligned}
        \relax [e_1+e_3,a_1e_1+a_2e_2+a_3e_3+a_4e_4] = 0  \\ 
        [e_3,a_1e_1+a_2e_2+a_3e_3+a_4e_4] = 0
    \end{aligned}
    \right. \iff a_1=a_2=a_3=a_4=0.
\end{equation}
Therefore, there is no vertex $v$ such that $\{e_1+e_3,v,e_3\}$ is a path. Equivalently, 
\begin{equation}
    \dist(e_1+e_3,e_3) \geq 3.
\end{equation}
Besides, $\{e_1+e_3,e_1+e_2+e_3+e_4,e_3+e_4,e_3\}$ is a path in $\Gamma(\mathcal L)$. It follows that  $\dist(e_1+e_3,e_3) = 3.$ In combining with Equation~\eqref{Equation<=3} we get $\diam(\Gamma(\mathcal L))=3$, as required.
    


\end{example}

\section{Connectedness and diameters of commuting graphs of general Lie algebras}\label{section:general}

In this section, we will study the diameters of commuting graphs of decomposable Lie algebras and give some equivalent conditions for a general Lie algebra to have a connected commuting graph.

\begin{proposition}\label{Prop:decomposable}
    Let $\cal L$ be a decomposable Lie algebra over a field $\mathbb{F}$.
    \begin{itemize}
        \item[(i)] \label{item(i)pro7}If $\mathcal{L}=\mathcal{L}_1\oplus \mathbb{F}^k$ for some noncommutative Lie subalgebra $\mathcal{L}_1 \subseteq \cal L$, then  $\Gamma(\cal L)$ is connected if and only if $\Gamma(\mathcal{L}_1)$ is connected. Moreover,    $\diam(\Gamma(\mathcal{L}))=\diam(\Gamma(\mathcal{L}_1)).$
        
        \item[(ii)] \label{item(ii)pro7} If $\mathcal{L}=\mathcal{L}_1 \oplus \mathcal{L}_2$ for some noncommutative Lie subalgebras $\mathcal{L}_1$ and $\mathcal{L}_2$ of $\cal L$, then $\Gamma(\cal L)$ is connected. Moreover, $\diam(\Gamma(\mathcal{L}))\leq 3.$ 
    \end{itemize}
\end{proposition}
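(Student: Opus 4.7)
The plan is to reduce both parts to the bracket structure on summands: writing $x=x_1+x_2$ according to the decomposition gives $[x,y]=[x_1,y_1]+[x_2,y_2]$, and hence $Z(\mathcal L)=Z(\mathcal L_1)\oplus Z(\mathcal L_2)$, with $Z(\mathbb F^k)=\mathbb F^k$ in part (i) since $\mathbb F^k$ is abelian.

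For part (i), every $z\in\mathbb F^k$ is central, so for $x=x_1+z,\ y=y_1+z'\in V_{\mathcal L}$ we have $[x,y]=[x_1,y_1]$. Thus $\Gamma(\mathcal L)$ is obtained from $\Gamma(\mathcal L_1)$ by blowing up each vertex $x_1\in V_{\mathcal L_1}$ into a clique $\{x_1+z:z\in\mathbb F^k\}$, so connectedness transfers directly. For the diameter equality, I would establish that $\dist_{\mathcal L}(x,y)=\dist_{\mathcal L_1}(x_1,y_1)$ whenever $x_1\neq y_1$: the upper bound follows from a shortest $\mathcal L_1$-path $x_1=u_0,u_1,\dots,u_k=y_1$ by forming the $\mathcal L$-path $x,u_1,\dots,u_{k-1},y$ of length $k$ (each adjacency reduces to one in $\mathcal L_1$ and $x,y$ lie outside $\mathcal L_1$ when their central components are nonzero); the lower bound follows by projecting any $\Gamma(\mathcal L)$-path onto $\mathcal L_1$, merging consecutive duplicates to produce a walk in $\Gamma(\mathcal L_1)$ of no greater length, and extracting a path from that walk. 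Combined with the trivial intra-fiber distance of $1$ and the fact that a connected commuting graph of a noncommutative Lie algebra has diameter at least $2$, this gives $\diam(\Gamma(\mathcal L))=\diam(\Gamma(\mathcal L_1))$.

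For part (ii), noncommutativity of both summands ensures $V_{\mathcal L_1},V_{\mathcal L_2}\neq\emptyset$, and the cross-bracket $[\mathcal L_1,\mathcal L_2]=0$ makes $(V_{\mathcal L_1},V_{\mathcal L_2})$ completely bi-adjacent: every $u\in V_{\mathcal L_1}$ commutes with every $v\in V_{\mathcal L_2}$. The key lemma is that each $x\in V_{\mathcal L}$ is either equal to or adjacent to some element of $V_{\mathcal L_1}$, and symmetrically of $V_{\mathcal L_2}$. To prove it, fix $i\in\{1,2\}$: if $x_i\in V_{\mathcal L_i}$, then $x_i$ itself is the required neighbor, as $[x,x_i]=[x_{3-i},x_i]=0$; if $x_i\in Z(\mathcal L_i)$, then $x$ commutes with every element of $\mathcal L_i$ and so is adjacent to any chosen vertex of $V_{\mathcal L_i}$. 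Given $x,y\in V_{\mathcal L}$, pick such $u\in V_{\mathcal L_1}$ for $x$ and $v\in V_{\mathcal L_2}$ for $y$; then $u,v$ are adjacent, and the sequence $x,u,v,y$ is a walk of length at most $3$ which contracts to a path of no greater length whenever some of the four vertices coincide.

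The main obstacle is the degenerate-case bookkeeping: in part (i), carefully justifying that the projection-and-merge step truly yields a path no longer than the original, so that the lower bound matches the upper bound; in part (ii), verifying that when two of $x,u,v,y$ coincide (for instance when $x\in\mathcal L_1$ itself forces $u=x$, or when $x\in V_{\mathcal L_2}$ happens to equal $v$), the bound $\leq 3$ is still reached. Both are routine but must be handled carefully to avoid losing sharpness.
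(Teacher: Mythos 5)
Your proof is correct and follows essentially the same route as the paper's: part (i) via identifying $\Gamma(\mathcal L)$ with a blow-up of $\Gamma(\mathcal L_1)$ along central fibers and transferring paths both ways, and part (ii) via $Z(\mathcal L)=Z(\mathcal L_1)\oplus Z(\mathcal L_2)$ together with routing any two vertices through neighbors in $V_{\mathcal L_1}$ and $V_{\mathcal L_2}$, which are completely joined. Your handling of the degenerate cases (duplicate projections in (i), coinciding vertices in the walk $x,u,v,y$ in (ii)) is in fact somewhat more careful than the paper's, which simply asserts the path correspondence and the case-by-case paths.
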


\begin{proof}
    Obviously, $\mathcal L$ can be decomposed into either $\mathcal L_1 \oplus \mathbb F^k$ or $\mathcal L_1 \oplus \mathcal L_2$.
    \begin{itemize}
        \item[$(i)$] Assume that $\mathcal{L}=\mathcal{L}_1\oplus \mathbb{F}^k$ for some noncommutative subalgebra $\mathcal{L}_1 \subseteq \cal L$. It is then trivial to see that $Z(\mathcal{L})=Z(\mathcal{L}_1)\oplus \mathbb{F}^k$. Then the vertex set of the commuting graph of $\mathcal L$, $V_{\cal L}$, can be expressed via the vertex set of the commuting graph of $\mathcal L_1$, $V_{\mathcal L_1}$, as follows:
        \begin{equation}
            V_{\mathcal L} = 
            \{x + y\,
         | \, x \in V_{\mathcal{L}_1}, \, y \in \mathbb{F}^k\}.
        \end{equation}
        Moreover, for every $x_1, \ldots,x_k\in V_{\mathcal{L}_1}$ 
        and for every $y_1, \ldots, y_k\in\mathbb{F}^k$, we easily see that $\left\{ x_1+y_1, x_2+y_2, \ldots, x_k+y_k\right\}$ is a path of $\Gamma(\mathcal{L})$ if and only if $\{x_1, \ldots, x_k\}$ is a path of $\Gamma(\mathcal{L}_1)$. This proves the first part of the theorem.
        
        \item[$(ii)$] Assume that $\cal L$ is a direct sum of two noncommutative subalgebras $\mathcal{L}_1, \mathcal{L}_2$. 
        First, we show that
        \begin{equation}\label{eq19}
            Z(\mathcal{L})= Z(\mathcal{L}_1) \oplus Z(\mathcal{L}_2).
        \end{equation}
        Indeed, if $x \in Z(\cal L)$ then $x=x_1+x_2$ for some $x_1 \in \mathcal{L}_1, x_2 \in \mathcal{L}_2$ such that $[x_1+x_2,L]=0$. Since $[\mathcal{L}_1,\mathcal{L}_2]=0$, we have $[x_1,\mathcal{L}_1]=[x_2,\mathcal{L}_2]=0$. Therefore, $x \in Z(\mathcal{L}_1)+Z(\mathcal{L}_2)$. Besides, if $x \in Z(\mathcal{L}_1)+Z(\mathcal{L}_2)$ then there exist $x_1 \in Z(\mathcal{L}_1)$ and $x_2 \in Z(\mathcal{L}_2)$ such that $x=x_1+x_2$. Since $[\mathcal{L}_1,\mathcal{L}_2]=0$, we get $[x,\mathcal{L}_1]=[x,\mathcal{L}_2]=0$ which turns out that $[x,\mathcal{L}]=0$ or $x \in Z(\mathcal{L})$. This proves Equation~\eqref{eq19}.
        
        

        
        
        Now, let $x_1, x_2$ be two arbitrary vertices of $\Gamma(\mathcal{L})$. Then there exist $x_{11}, x_{21}\in \mathcal{L}_1$, and $x_{12},x_{22} \in \mathcal{L}_2$ such that $x_1=x_{11}+x_{12}$ and $x_2=x_{21}+x_{22}$. Note that the vertex sets of both the commuting graphs of $\mathcal L_1$ and $\mathcal L_2$ are non-empty due to the fact that $\mathcal L_1$ and $\mathcal L_2$ are noncommutative.
        \begin{itemize}
                      
            \item If $x_{11} \in Z(\mathcal{L}_1)$ and $x_{21} \in Z(\mathcal{L}_1)$ then for every $t \in V_{\mathcal{L}_1}$, we have the path $\{x_1,t,x_2\}$. 

            \item If $x_{11} \in Z(\mathcal{L}_1)$ and $x_{21} \in V_{\mathcal{L}_1}$
            then $\{x_1,x_{21},x_2\}$ is a path.

            \item Similarly, if $x_{22} \in Z(\mathcal{L}_2)$, then $x_1$ and $x_2$ are joined by a path of length at most 2.  

            \item If $x_{11} \in V_{\mathcal{L}_1}$ 
            and $x_{22} \in V_{\mathcal{L}_2}$  
            then we can derive from Equation~\eqref{eq19} that both $x_{11}$ and $x_{22}$ are the vertices of $\Gamma(\mathcal{L})$. If so, it is elementary to see that $\{x_1,x_{11},x_{22},x_2\}$ is a path in $\Gamma(\mathcal L)$.
            
        \end{itemize}
        In summary, we have proven that if $\mathcal{L}$ is a direct sum of two noncommutative subalgebras then every pair of distinct vertices $x_1,x_2$ are joined by a path of length at most $3$. This proves the second part of the theorem.
    \end{itemize}
\end{proof}

We shall give here an example to demonstrate Proposition~\ref{Prop:decomposable}.
\begin{example} \label{example:n21}
    Let $\mathcal{L} = \left< e_1, e_2, e_3,e_4\right>$ be the direct sum of two copies of the Lie algebra of dimension 2 over the binary field $\mathbb F_2$. That is, $\mathcal L$ is the $4$-dimensional Lie algebra defined over $\F_2$  as follows:
    \begin{equation}\label{eq:example}
            \mathcal{L} = \left< e_1, e_2, e_3,e_4\right> \colon [e_1,e_2] = e_2, \, [e_3,e_4]=e_4.
    \end{equation}
    Then we have $Z(\mathcal{L})=\{0\}$. Therefore, $\Gamma(\mathcal L)$ has 15 vertices: 
    \begin{equation}
        V_{\mathcal L} =\{e_1, \ldots, e_4, e_1+e_2, \ldots, e_3+e_4, e_1+e_2+e_3, \ldots, e_2+e_3+e_4, e_1+e_2+e_3+e_4\}.
    \end{equation}
    According to the structure constants of $\mathcal L$, we easily observe that 
    \begin{equation}
    [a_1e_1+a_2e_2+a_3e_3+a_4e_4,b_1e_1+b_2e_2+b_3e_3+b_4e_4] = 0 \iff a_1b_2-a_2b_1=a_3b_4-a_4b_3=0.
    \end{equation}
    Thus, we obtain the commuting graph of $\mathcal L$ as in \figurename~\ref{fig.2}.
    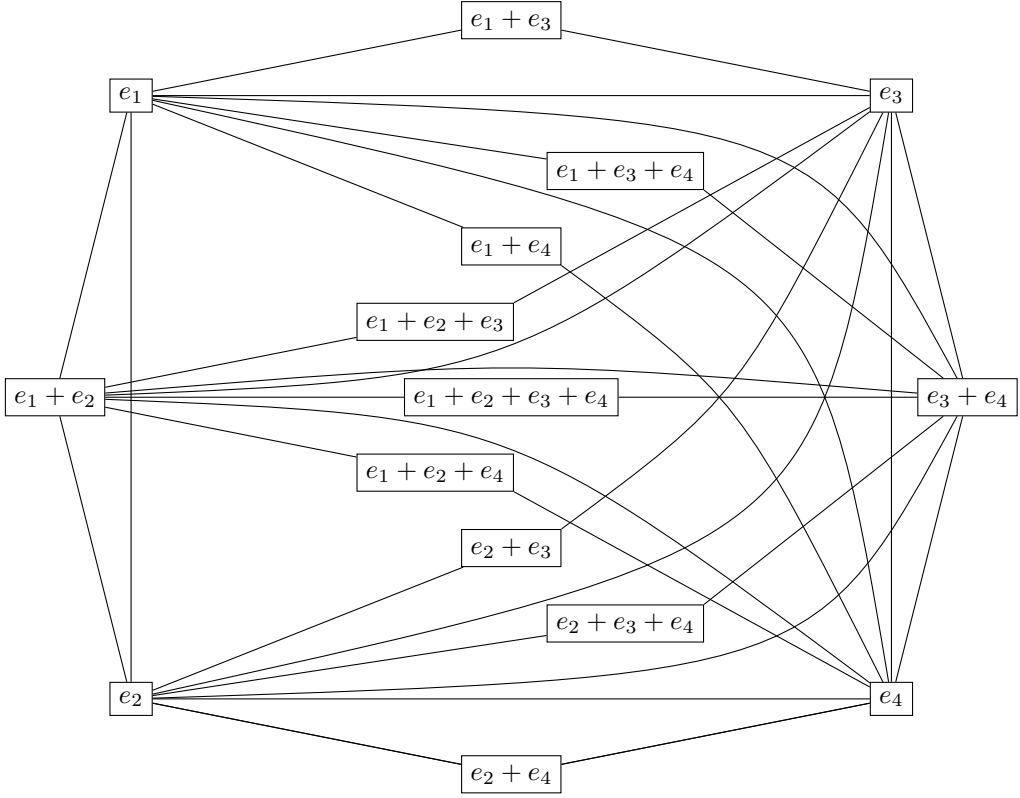
\begin{figure}[!ht]
    \centering
        \begin{tikzpicture}[auto=center,every node/.style={,draw=black,fill=white}]
        \node (e1) at (-5,4){$e_1$};
        \node (e2) at (-5,-4){$e_2$};
        \node (e12) at (-6,0){$e_1+e_2$};
        \node (e3) at (5,4){$e_3$};
        \node (e4) at (5,-4){$e_4$};
        \node (e34) at (6,0){$e_3+e_4$};
        \node (e13) at (0,5){$e_1+e_3$};
        \node (e134) at (1.5,3){$e_1+e_3+e_4$};
        \node (e14) at (0,2){$e_1+e_4$};
        \node (e123) at (-1,1){$e_1+e_2+e_3$};
        \node (e1234) at (0,0){$e_1+e_2+e_3+e_4$};
        \node (e124) at (-1,-1){$e_1+e_2+e_4$};
        \node (e23) at (0,-2){$e_2+e_3$};
        \node (e234) at (1.5,-3){$e_2+e_3+e_4$};
        \node (e24) at (0,-5){$e_2+e_4$};
        \draw (e1) -- (e12) -- (e2) -- (e1);
        \draw (e3) -- (e34) -- (e4) -- (e3);
        \draw (e1) -- (e13) -- (e3) -- (e1);
        \draw (e2) -- (e24) -- (e4) -- (e2);
        \draw (e1) -- (e134); \draw (e134.south east) -- (e34); \draw (e1) .. controls (4,3.7) .. (e34);
        \draw (e1) -- (e14); \draw (e14.south east) .. controls (3,0) .. (e4); \draw (e1) .. controls (4,2) .. (e4); 
        \draw (e12) -- (e123); \draw (e123.north east) -- (e3); \draw (e12) .. controls (0,0.25) .. (e3);
        \draw (e12) -- (e1234) -- (e34); 
        \draw (e12) -- (e124); \draw (e124.south east) -- (e4);
        \draw (e2) -- (e23); \draw (e23.north east)  .. controls (3,0) .. (e3); \draw (e2) .. controls (4,-2) .. (e3); 
        \draw (e2) -- (e234); \draw (e234.north east) -- (e34); \draw (e2) .. controls (4,-3.7) .. (e34);
        \draw (e2) -- (e24) -- (e4); \draw (e12) .. controls (0,-0.25) .. (e4);
        \draw (e12) .. controls (0,0.5) .. (e34);
        %
    \end{tikzpicture}
    \caption{The commuting graph of the Lie algebra $\mathcal L$ defined in Equation \eqref{eq:example} with the ground field $\mathbb F_2$.}\label{fig.2}.
    \end{figure}
    It is clear that the diameter of the graph in \figurename~\ref{fig.2} is 3. Therefore,
   \begin{equation}\label{example.equality}
       \diam\left(\Gamma(\mathcal{L})\right)=3.
   \end{equation}
\end{example}

\begin{remark}\label{remark}
    The equality~\eqref{example.equality} still holds for any field $\mathbb F$.  Indeed, 
    \begin{equation}
        \left\{ 
            \begin{aligned}
                & [e_1+e_3, a_1e_1+a_2e_2+a_3e_3+a_4e_4]=0,\\
                & [e_2+e_4, a_1e_1+a_2e_2+a_3e_3+a_4e_4]=0
            \end{aligned}
        \right. \iff \left\{ 
            \begin{aligned}
                & a_2=a_4=0,\\
                & a_1=a_3=0.
            \end{aligned}
        \right.
    \end{equation}
    Therefore, there is no vertex in $\Gamma(\mathcal L)$ which is adjacent to both $e_1+e_3$ and $e_2+e_4$. It follows that $\diam(\Gamma(\mathcal L)) \geq 3.$ Applying Proposition~\ref{Prop:decomposable}, we observe that $\diam(\Gamma(\mathcal L)) = 3$, as required.
\end{remark}
\begin{proposition}\label{Prop.disconnected-setS}
    Let $\cal L$ be an indecomposable Lie algebra. Then $\Gamma(\cal L)$ is disconnected if and only if there is a nonempty set $\mathcal{S} \subsetneq V_{\cal L}$ 
    such that $\ker (\ad_x) \subseteq \mathcal{S} \cup Z(\cal L)$ for every $x \in \cal S$.
\end{proposition}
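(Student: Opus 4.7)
The plan is to prove both implications essentially by following paths in the commuting graph and tracking when a vertex must belong to the candidate set $\mathcal{S}$. The indecomposability hypothesis does not appear to enter the argument in an essential way; the equivalence will hold for any noncommutative Lie algebra, and the main care is simply to distinguish vertices from central elements.

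For the forward direction, assume $\Gamma(\mathcal{L})$ is disconnected and let $\mathcal{S}$ be the vertex set of one connected component. Then $\mathcal{S}$ is nonempty and, because disconnectedness forces at least one other component, $\mathcal{S} \subsetneq V_{\mathcal{L}}$. For any $x \in \mathcal{S}$ and any $y \in \ker(\ad_x)$, I would split on whether $y \in Z(\mathcal{L})$: if so, $y$ already lies in the right-hand side; otherwise $y \in V_{\mathcal{L}}$ and either $y = x \in \mathcal{S}$, or $\{x,y\} \in E_{\mathcal{L}}$ is an edge, forcing $y$ to lie in the same component as $x$, hence in $\mathcal{S}$. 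This gives $\ker(\ad_x) \subseteq \mathcal{S} \cup Z(\mathcal{L})$.

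For the converse, suppose such a set $\mathcal{S}$ exists and, for contradiction, that $\Gamma(\mathcal{L})$ is connected. Pick any $x \in \mathcal{S}$ and any $y \in V_{\mathcal{L}} \setminus \mathcal{S}$ (possible since $\mathcal{S}$ is a proper nonempty subset of $V_{\mathcal{L}}$). Connectedness produces a path $x = v_0, v_1, \ldots, v_n = y$ in $\Gamma(\mathcal{L})$. I would prove by induction on $i$ that every $v_i$ lies in $\mathcal{S}$: the base case is $v_0 = x \in \mathcal{S}$, and for the step, $[v_i, v_{i+1}] = 0$ together with the hypothesis on $\mathcal{S}$ gives $v_{i+1} \in \ker(\ad_{v_i}) \subseteq \mathcal{S} \cup Z(\mathcal{L})$; since $v_{i+1}$ is a vertex it lies outside $Z(\mathcal{L})$, hence in $\mathcal{S}$. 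Setting $i = n$ forces $y \in \mathcal{S}$, contradicting the choice of $y$.

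The argument is short and the only genuine care needed is the bookkeeping between $V_{\mathcal{L}}$ and $Z(\mathcal{L})$: $\ker(\ad_x)$ is a subspace of $\mathcal{L}$, not of $V_{\mathcal{L}}$, so elements of it can lie in $Z(\mathcal{L})$ (and must be accommodated by the $\cup \, Z(\mathcal{L})$ on the right-hand side), while vertices along a path are guaranteed to lie in $V_{\mathcal{L}}$. There is no substantive obstacle; the statement is essentially the graph-theoretic fact that a subset $\mathcal{S}$ with the property that every centralizer of an element of $\mathcal{S}$ is ``absorbed'' by $\mathcal{S}$ (modulo the center) is a union of connected components of $\Gamma(\mathcal{L})$.
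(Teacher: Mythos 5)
Your proposal is correct and follows essentially the same route as the paper: both directions take $\mathcal{S}$ to be (or behave like) the vertex set of a connected component and exploit that the hypothesis makes $\mathcal{S}$ closed under adjacency modulo the center. Your explicit induction along the path and your handling of the case $y=x$ are just slightly more careful bookkeeping of the same argument, and your observation that indecomposability is never used is accurate — the paper's proof does not use it either.
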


\begin{proof} 
    Suppose that there is a nonempty set $\mathcal{S} \subsetneq V_{\cal L}$ 
    such that $\ker (\ad_x) \subseteq \mathcal{S} \cup Z(\cal L)$ for every $x\in \mathcal{S}$. Let $y$ be any vertex in $\Gamma(\cal L)$ which does not belong to $\mathcal{S}$. Such $y$ exists because $\mathcal{S} \subsetneq V_{\cal L}$. Since $\ker (\ad_x) \subseteq \mathcal{S} \cup Z(\cal L)$ for every $x\in \mathcal{S}$, we have $y \notin \ker (\ad_x)$ for every $x\in \mathcal{S}$. Therefore, $[y,x]\neq 0$ for every $x\in \mathcal{S}$. It follows that $y$ is not adjacent to any vertex in $\mathcal{S}$. Thus, there is no path between any pair of vertices $x\in \mathcal{S}$ and $y\in V_{\mathcal{L}} \setminus \mathcal{S}$. Therefore, $\Gamma(\cal L)$ is disconnected.

    Conversely, suppose that $\Gamma(\cal L)$ is disconnected. Let $G$ be a connected component of $\Gamma(\cal L)$. Let $\mathcal{S}=V_{G}$ be the vertex set of $G$. Then $\mathcal{S} \subsetneq V_{\cal L}$. 
    Let $x$ be any element in $\cal S$ and let $y$ be any element in $\ker (\ad_x)$. Then $[x,y]=0$. Thus, $x$ is adjacent to any element in $\ker (\ad_x) \setminus Z(\cal L)$. Since $G$ is a connected component of $\Gamma(\cal L)$, we should have $\ker (\ad_x) \setminus Z(\mathcal{L}) \subseteq \cal S$. In other words, $\ker (\ad_x) \subseteq \mathcal S \cup Z(\cal L)$ for every $x \in \cal S$. This completes the proof. 
\end{proof}

In particular, we may obtain the connectedness of the commuting graph of a Lie algebra $\mathcal L$ by examining a basis of $\mathcal L$ as in Corollary~\ref{corollary} and the dimension of $\ker(\ad_{x})$ for some $x \in \mathcal L$ as in Corollary~\ref{coro1}.
\begin{corollary}\label{corollary}
    Let $\cal L$ be an indecomposable, finite-dimensional Lie algebra. Suppose that $Z^c(\cal L)$ is a complementary subspace of $Z(\cal L)$ in $\cal L$ and 
    $\boldsymbol{b}$ is a basis of $Z^c(\cal L)$. Moreover, assume that there is a nonempty proper subset $\boldsymbol{b}_1$ of $\boldsymbol{b}$ such that $[x_1,x_2]\neq 0$ for every $x_1\in \left<\boldsymbol{b}_1\right> \setminus \{0\}$ and $x_2 \in \left<\boldsymbol{b}\setminus\boldsymbol{b}_1\right> \setminus \{ 0 \}$. Then $\Gamma(\cal L)$ is disconnected.
\end{corollary}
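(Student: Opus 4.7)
The plan is to apply Proposition~\ref{Prop.disconnected-setS} with the explicit witness
\[
\mathcal{S} := \big(\langle \boldsymbol{b}_1\rangle \oplus Z(\mathcal{L})\big) \setminus Z(\mathcal{L}),
\]
the set of non-central elements whose component along $\langle \boldsymbol{b}\setminus\boldsymbol{b}_1\rangle$ vanishes in the vector-space decomposition $\mathcal{L} = \langle \boldsymbol{b}_1\rangle \oplus \langle \boldsymbol{b}\setminus\boldsymbol{b}_1\rangle \oplus Z(\mathcal{L})$. The routine bookkeeping comes first: $\mathcal{S}$ is nonempty because $\boldsymbol{b}_1 \neq \emptyset$ and $\langle \boldsymbol{b}_1\rangle \cap Z(\mathcal{L}) = \{0\}$ (since $\boldsymbol{b}_1 \subseteq Z^c(\mathcal{L})$), so any nonzero element of $\langle \boldsymbol{b}_1\rangle$ lies in $\mathcal{S}$; and $\mathcal{S} \subsetneq V_{\mathcal{L}}$ because $\boldsymbol{b} \setminus \boldsymbol{b}_1$ is nonempty, so any nonzero element of $\langle \boldsymbol{b}\setminus\boldsymbol{b}_1\rangle$ is a vertex of $\Gamma(\mathcal{L})$ outside $\mathcal{S}$.

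The heart of the argument is to verify $\ker(\ad_x) \subseteq \mathcal{S} \cup Z(\mathcal{L})$ for every $x \in \mathcal{S}$. Writing $x = x_1 + z_x$ with $x_1 \in \langle \boldsymbol{b}_1\rangle\setminus\{0\}$, and decomposing an arbitrary $y \in \ker(\ad_x)$ as $y = y_1 + y_2 + z_y$ along the same splitting, the condition $[x,y] = 0$ reduces to $[x_1, y_1] + [x_1, y_2] = 0$. The aim is to force $y_2 = 0$: this places $y$ in $\langle \boldsymbol{b}_1\rangle \oplus Z(\mathcal{L}) = \mathcal{S}\cup Z(\mathcal{L})$, and Proposition~\ref{Prop.disconnected-setS} immediately yields the desired disconnectedness of $\Gamma(\mathcal{L})$.

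The main obstacle is ruling out a cancellation $[x_1,y_1] = -[x_1,y_2] \neq 0$: the hypothesis applied directly gives $[x_1,y_2] \neq 0$ whenever $y_2 \neq 0$, but is silent about the relative position of $[x_1,y_1]$ and $[x_1,y_2]$ inside $\mathcal{L}$. I plan to handle this by varying $x_1$ within $\langle \boldsymbol{b}_1\rangle$: for every scalar $t$ such that $x_1 + t y_1 \neq 0$, the element $x_1 + t y_1$ still lies in $\langle \boldsymbol{b}_1\rangle$, and the hypothesis applied to the pair $(x_1+ty_1, y_2)$ forces $[x_1,y_2] + t[y_1,y_2] \neq 0$. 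Applying the hypothesis separately to $(y_1, y_2)$ (assuming for contradiction that $y_2 \neq 0$ and $y_1 \neq 0$) gives $[y_1, y_2] \neq 0$, while the degenerate case $y_1 \parallel x_1$ leads to $[x_1,y_1] = 0$, contradicting $[x_1,y_1] = -[x_1,y_2] \neq 0$. Expanding the relevant brackets in the basis $\boldsymbol{b}$ together with a basis of $Z(\mathcal{L})$ and tracking $\langle \boldsymbol{b}\setminus\boldsymbol{b}_1\rangle$-coordinates, while invoking the indecomposability of $\mathcal{L}$ to exclude the remaining degenerate alignment, should complete the deduction $y_2 = 0$ and close the proof.
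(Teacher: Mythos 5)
Your proposal follows exactly the same route as the paper: the same witness set $\mathcal{S}=\left(\left<\boldsymbol{b}_1\right>+Z(\mathcal L)\right)\setminus Z(\mathcal L)$ fed into Proposition~\ref{Prop.disconnected-setS}. The difference is that the paper simply asserts the inclusion $\ker(\ad_x)\subseteq\mathcal S\cup Z(\mathcal L)$ with no argument, whereas you try to prove it and correctly isolate the real difficulty, namely a possible cancellation $[x_1,y_1]=-[x_1,y_2]\neq 0$.

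Unfortunately, the plan to ``force $y_2=0$'' cannot be completed, because the inclusion you are trying to establish is false under the stated hypotheses. Consider the $3$-dimensional Lie algebra
\begin{equation*}
\mathcal L=\left<e_1,e_2,e_3\right>\colon\ [e_1,e_2]=-e_1,\quad [e_1,e_3]=e_1,\quad [e_2,e_3]=e_2+e_3 .
\end{equation*}
One checks directly that the Jacobi identity holds, that $Z(\mathcal L)=\{0\}$ (so $\mathcal L$ is indecomposable, since a one-dimensional direct summand would be central), and that with $\boldsymbol{b}=\{e_1,e_2,e_3\}$, $\boldsymbol{b}_1=\{e_1,e_2\}$ the hypothesis of the corollary is satisfied: $[\alpha e_1+\beta e_2,\gamma e_3]=\gamma\left(\alpha e_1+\beta e_2+\beta e_3\right)\neq 0$ whenever $(\alpha,\beta)\neq(0,0)$ and $\gamma\neq 0$. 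Yet $[e_1,e_2+e_3]=-e_1+e_1=0$, so $e_2+e_3\in\ker(\ad_{e_1})$ while $e_2+e_3\notin\left<e_1,e_2\right>=\mathcal S\cup Z(\mathcal L)$; here $x_1=e_1$, $y_1=e_2$, $y_2=e_3$ realize precisely the cancellation you were worried about, with $y_1$ not proportional to $x_1$, $[y_1,y_2]\neq 0$, and $\mathcal L$ indecomposable, so none of the escape routes in your last paragraph is available. (In this example $\Gamma(\mathcal L)$ is still disconnected, by Proposition~\ref{Prop.disconnected}, so this does not by itself refute the corollary --- but it does show that neither the paper's one-line assertion nor your proposed completion of it is valid as written.) The inclusion, and hence this proof strategy, does go through under the additional assumption that $[\left<\boldsymbol{b}_1\right>,\left<\boldsymbol{b}_1\right>]=0$, since then $[x_1,y]=[x_1,y_2]$ and the hypothesis immediately forces $y_2=0$; that extra hypothesis holds in the paper's application to $\mathfrak n_{6,3}$ and is what your ``vary $x_1$'' argument would need in order to close.
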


\begin{proof}
    Let $\mathcal{S}=\left\{\sum_{x\in \boldsymbol{b}_1} \lambda_x x+y \, | \,\lambda_x \in \mathbb F, y\in Z(\mathcal{L})\right\} \setminus Z(\cal L)$. Then $\emptyset \subsetneq \mathcal{S} \subsetneq V_{\mathcal{L}}$ and $\ker (\ad_x) \subseteq \mathcal{S} \cup Z(\cal L)$ for every $x\in \cal S$. According to Proposition~\ref{Prop.disconnected-setS}, we conclude that $\Gamma(\cal L)$ is disconnected, as required.
\end{proof}

\begin{corollary}\label{coro1}
    Let $\cal L$ be an indecomposable, finite dimensional Lie algebra with center $Z(\cal L)$. if there exists $x$ such that $\dim\left(\ker(\ad_x ) \right) = \dim Z(\mathcal{L})+1$ then $\Gamma(\mathcal L)$ is disconnected.
\end{corollary}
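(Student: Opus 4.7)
The plan is to deduce the result as a fairly immediate consequence of Proposition~\ref{Prop.disconnected-setS}. The strategy is to produce the witnessing set $\mathcal{S}$ explicitly: take $\mathcal{S}$ to be $\ker(\ad_x) \setminus Z(\mathcal L)$, where $x$ is the given element, and verify the two hypotheses of the proposition for this set.

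First I would establish that $x \notin Z(\mathcal L)$. Since central elements always lie in $\ker(\ad_x)$, we have $Z(\mathcal L) \subseteq \ker(\ad_x)$. If $x$ were central, then $\ker(\ad_x) = \mathcal L$, forcing $\dim \mathcal L = \dim Z(\mathcal L) + 1$. Then $\mathcal L / Z(\mathcal L)$ would be one-dimensional, hence cyclic, which by the standard argument forces $\mathcal L$ to be abelian, contradicting the noncommutativity assumption in force throughout the paper. Therefore $x \in V_{\mathcal L}$, and combined with $x \in \ker(\ad_x)$ and the dimension hypothesis, we obtain
\begin{equation*}
\ker(\ad_x) = Z(\mathcal L) \oplus \mathbb F x.
\end{equation*}

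Next I would verify that $\mathcal{S} := \ker(\ad_x) \setminus Z(\mathcal L)$ satisfies the hypotheses of Proposition~\ref{Prop.disconnected-setS}. It is nonempty since it contains $x$. It is a proper subset of $V_{\mathcal L}$: the complement $V_{\mathcal L} \setminus \mathcal{S}$ equals $\mathcal L \setminus \ker(\ad_x)$, which is nonempty, for otherwise $\mathcal L = \ker(\ad_x)$ and the same cyclic-quotient argument as above would make $\mathcal L$ abelian. Finally, for any $y \in \mathcal{S}$, the decomposition $\ker(\ad_x) = Z(\mathcal L) \oplus \mathbb F x$ lets us write $y = \lambda x + z$ with $\lambda \in \mathbb F^\ast$ and $z \in Z(\mathcal L)$; since $\ad_z = 0$, we get $\ad_y = \lambda \ad_x$, whence
\begin{equation*}
\ker(\ad_y) = \ker(\ad_x) = \mathcal{S} \cup Z(\mathcal L).
\end{equation*}
Applying Proposition~\ref{Prop.disconnected-setS} then yields the disconnectedness of $\Gamma(\mathcal L)$.

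There is essentially no hard obstacle here; the main care needed is the first step, ruling out $x \in Z(\mathcal L)$, because otherwise the decomposition $\ker(\ad_x) = Z(\mathcal L) \oplus \mathbb F x$ is not available, and one cannot conclude that $\ad_y$ is a scalar multiple of $\ad_x$ for $y \in \mathcal{S}$. Once the noncommutativity hypothesis is used to rule this case out, the rest of the argument is a direct computation of $\ker(\ad_y)$ and an appeal to the preceding proposition.
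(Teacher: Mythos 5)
Your proof is correct and follows essentially the same route as the paper: rule out $x \in Z(\mathcal L)$ using noncommutativity, deduce $\ker(\ad_x) = Z(\mathcal L) \oplus \mathbb F x$, take $\mathcal S = \ker(\ad_x) \setminus Z(\mathcal L)$ (which is exactly the paper's set $\{ax+y \mid 0 \neq a \in \mathbb F,\ y \in Z(\mathcal L)\}$), observe $\ad_{\lambda x + z} = \lambda\,\ad_x$ for central $z$, and invoke Proposition~\ref{Prop.disconnected-setS}. No gaps; your write-up is if anything slightly more careful about why $\mathcal S$ is a proper subset of $V_{\mathcal L}$.
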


\begin{proof}
    Since $\dim \left(\ker (\ad_x)\right) = \dim Z(\mathcal L)+1$, we have $x\notin Z(\mathcal L)$. Indeed, if $x\in Z(\mathcal L)$ then $\dim Z(\mathcal L)=\dim (\ker(\ad_x)) - 1 = \dim \mathcal L-1$ which turns out that $\mathcal L$ is commutative, a contradiction. Therefore, the dimension of the direct sum of two vector spaces $Z(\mathcal L)$ and $\left< x\right>$ can be determined by the following formula:
    \begin{equation}
        \dim\left(Z(\mathcal L) + \left< x \right>\right) = \dim Z(\mathcal L)+1.
    \end{equation}
    Furthermore, it is clear that $Z(\mathcal L) + \left< x \right>$ is contained in the vector space $\ker (\ad_x)$. We then conclude from the equality $\dim\left(\ker(\ad_x)\right)  = \dim Z(\mathcal{L})+1$ that 
    \begin{equation}\label{eq:cor11}
        \ker (\ad_x) = Z(\mathcal L) + \left< x \right>.
    \end{equation}
    Besides, it is easy to see that $\ker (\ad_{ax+y})= \ker (\ad_x)$ for every $0 \neq a\in \mathbb F$ and $y \in Z(\mathcal L)$. Therefore, by letting $\mathcal S=\left\{ax+y \, | \, 0\neq a\in \mathbb F, y \in Z(\mathcal L) \right\}$, we observe from Equality~\eqref{eq:cor11} that 
    $
        \ker (\ad_s) = Z(\mathcal L) \cup \mathcal S$ for every  $s \in \mathcal S$.
    Applying Proposition~\ref{Prop.disconnected-setS}, we get the proof.
\end{proof}

\begin{example} 
    Let $\mathfrak{n}_{6,3}$ be the real Lie algebra defined in \cite[Chapter 19]{Snob} as follows:
    \begin{equation}
        \mathfrak{n}_{6,3} = \left< e_1, e_2, \ldots,e_6\right> \colon 
            [e_1,e_3] = [e_2,e_4]=e_5, \, [e_1,e_4]=-[e_2,e_3]=-e_6.
    \end{equation}
    Since $Z(\mathfrak{n}_{6,3}) = \left< e_5, e_6 \right>$, a complementary of $Z(\mathfrak{n}_{6,3})$ in the vector space $\mathfrak n_{6,3}$ can be chosen to be $Z^c(\mathfrak{n}_{6,3}) = \left< e_1, e_2, e_3, e_4 \right>$.
     Let $\boldsymbol{b}_1=\{e_1, e_2\}$. Then 
    \begin{equation}
        [\lambda_1e_1+\lambda_2 e_2,\mu_1 e_3+\mu_2 e_4]=0 \iff \lambda_1\mu_1+\lambda_2\mu_2=\lambda_1\mu_2-\lambda_2\mu_1=0 \iff \left[
    \begin{aligned}
        \lambda_1=\lambda_2=0, \\
        \mu_1=\mu_2=0.
    \end{aligned}
    \right.
    \end{equation}
    Therefore, by applying Corollary~\ref{corollary}, $\Gamma(\mathfrak n_{6,3})$ is disconnected. 
\end{example}

By definition, the vertex set of the commuting graph of a Lie algebra $\cal L$ is $V_{\cal L} = \mathcal L \setminus Z(\mathcal{L})$. Therefore, when the center of $\mathcal L$ is ``big enough", the commuting graph $\Gamma(\mathcal L)$ should be disconnected. This remarkable point is given in the following proposition. 
\begin{proposition}\label{Prop.disconnected}
    Let $\cal L$ be an indecomposable, finite-dimensional Lie algebra over a field $\mathbb F$ such that $\dim \mathcal L - \dim Z(\mathcal{L}) \leq 3$. Then $\Gamma(\cal L)$ is disconnected.
\end{proposition}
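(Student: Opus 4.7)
The plan is to invoke Corollary~\ref{coro1}: I will exhibit an element $x \in \mathcal{L} \setminus Z(\mathcal{L})$ with $\dim \ker(\ad_x) = \dim Z(\mathcal{L}) + 1$. Set $n := \dim \mathcal{L}$ and $z := \dim Z(\mathcal{L})$. I will first note that noncommutativity rules out $n - z \leq 1$ (if $\mathcal{L}/Z(\mathcal{L})$ is one-dimensional, every bracket vanishes), so the hypothesis leaves only $n - z \in \{2, 3\}$. I fix a complementary subspace $Z^c(\mathcal{L})$ of $Z(\mathcal{L})$ in $\mathcal{L}$ and a basis $\{u_1, \ldots, u_{n-z}\}$ of it.

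When $n - z = 2$, noncommutativity forces $[u_1, u_2] \neq 0$, and I can check directly that $\ker(\ad_{u_1}) = Z(\mathcal{L}) \oplus \mathbb{F}u_1$, which has dimension $z + 1$; Corollary~\ref{coro1} closes this case. When $n - z = 3$, I will set $a := [u_1, u_2]$, $b := [u_1, u_3]$, $c := [u_2, u_3]$ and observe that the image of $\ad_{u_i}$ is spanned by the two brackets containing $u_i$, so $\dim \ker(\ad_{u_i}) = z + 1$ exactly when the corresponding pair of brackets is linearly independent. The task thus reduces to showing that at least one of $\{a, b\}$, $\{a, c\}$, $\{b, c\}$ is linearly independent.

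To establish this, I will argue by contradiction: assume all three pairs are linearly dependent. A brief case analysis on how many of $a, b, c$ vanish will then force them all to lie in a common one-dimensional subspace $\mathbb{F}v \subseteq \mathcal{L}$; writing $a = \alpha v$, $b = \beta v$, $c = \gamma v$, noncommutativity guarantees $(\alpha, \beta, \gamma) \neq (0, 0, 0)$. A direct bracket computation then shows that $w := \gamma u_1 - \beta u_2 + \alpha u_3$ satisfies $[w, u_i] = 0$ for $i = 1, 2, 3$, so $w$ commutes with every element of $\mathcal{L} = Z(\mathcal{L}) + Z^c(\mathcal{L})$; hence $w \in Z(\mathcal{L}) \cap Z^c(\mathcal{L}) = \{0\}$, contradicting $(\alpha, \beta, \gamma) \neq 0$. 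I expect the main obstacle to be precisely this last constructive step: extracting, from the pairwise dependence of $\{a, b, c\}$, a nonzero combination of $u_1, u_2, u_3$ that lands in the center. Once this element $w$ is produced, Corollary~\ref{coro1} delivers disconnectedness and completes the proof.
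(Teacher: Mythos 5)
Your proposal is correct and follows essentially the same route as the paper: reduce to codimension $2$ or $3$ over the center, and in the codimension-$3$ case show that two of the brackets $[u_i,u_j]$ must be linearly independent by observing that pairwise dependence would produce a nonzero central element $\gamma u_1-\beta u_2+\alpha u_3$ (exactly the paper's $e_3-ae_2+be_1$), after which Corollary~\ref{coro1} applies. The only cosmetic difference is in the codimension-$2$ case, where the paper exhibits the connected components directly while you invoke Corollary~\ref{coro1} there as well, which is a valid and slightly more uniform variant.
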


\begin{proof}
    Since $\mathcal L$ is noncommutative, we must have $\dim \mathcal L - \dim Z(\mathcal L) \geq 2$. Therefore, $\dim \mathcal L - \dim Z (\mathcal L) \in \{2, \, 3\}$. 
\\[5pt]
    \noindent
    \textbf{Case 1.} $\dim \mathcal L - \dim Z(\mathcal L)=2$. If so, there are $e_1, e_2 \in \mathcal L$ such that the vector space $\mathcal L$ is decomposed into the following direct sum: $\mathcal L= Z(\mathcal L) + \left< e_1, e_2\right>$. It follows that the vertex set of $\Gamma(\mathcal L)$ is 
    \begin{equation}
        V_{\cal L}=\left\{ a_1e_1+a_2e_2+x \, | \, (0,0) \neq (a_1, a_2) \in \mathbb F^2, \, x \in Z(\mathcal L) \right\}.
    \end{equation}
    Since $\mathcal L$ is noncommutative, we have $[e_1, e_2] \neq 0$. Therefore, for every $x,y \in Z(\mathcal L)$ and for every $a_1, a_2, b_1, b_2 \in \mathbb F$, we have
    \begin{equation}
        [a_1e_1+a_2e_2+x,b_1e_1+b_2e_2+y]=0 \iff a_1b_2-a_2b_1=0.
    \end{equation}
    Thus, $\left\{a_1e_1+x \, | \, 0\neq a_1 \in \mathbb F, \, x\in Z(\mathcal L) \right\}$ and $\left\{a_2e_2+y \, | \, 0 \neq a_2 \in \mathbb F, \, y \in Z(\mathcal L) \right\}$ are two different connected components of $\Gamma(\mathcal L)$. It follows that $\Gamma(\mathcal L)$ is disconnected, as required.
\\[5pt]
\noindent
    \textbf{Case 2.} $\dim \mathcal L - \dim Z(\mathcal L)=3$. Without loss of generality, we assume that the vector space $\mathcal L$ has a decomposition $\mathcal L = Z(\mathcal L) + \left< e_1, e_2, e_3 \right>$ for some independent vectors $\{e_1, e_2, e_3\}$ in $\mathcal L$. If so, the derived algebra of $\mathcal L$, denoted by $\mathcal L^1$, is determined by:
    \begin{equation}
        \mathcal L^1 = \left< [e_1, e_2], [e_1,e_3], [e_2,e_3] \right>.
    \end{equation}
    First, we prove that $\dim \mathcal L^1 \geq 2$. By contradiction, suppose that $\dim \mathcal L^1=1$. Then we may assume (after a re-indexing if necessary) that $\mathcal{L}^1=\left< [e_1,e_2] \right>$. Thus, there are $a$ and $b$ in $\mathbb F$ such that $[e_1,e_3]=a[e_1,e_2]$ and $[e_2,e_3]=b[e_1,e_2]$. It follows that 
    \begin{equation}
        [e_1,e_3-ae_2+be_1]=[e_2,e_3-ae_2+be_1]=[e_3,e_3-ae_2+be_1]=0.
    \end{equation}
    This turns out that $e_3-ae_2+be_1$ belongs to $Z(\mathcal L)$, a contradiction.
    Therefore, $\dim \mathcal L^1 \geq 2$. Without loss of generality, we may assume that $[e_1,e_2]$ and $[e_1,e_3]$ are linearly independent. Thus, for every $x\in Z(\mathcal L)$, we have
    \begin{equation}
        [e_1, a_1e_1+a_2e_2+a_3e_3+x] = 0 \iff a_2=a_3=0.
    \end{equation}
    In other words, $\ker (\ad_{e_1}) = Z(\mathcal L) + \left< e_1\right>$. According to Corollary~\ref{coro1}, we observe that $\Gamma(\mathcal L)$ is disconnected, as required.
\end{proof}
The following corollary follows immediately from Propositions~\ref{Prop:decomposable} and ~\ref{Prop.disconnected}.
\begin{corollary}\label{cor.dim<=3}
    Let $\mathcal L$ be a Lie algebra of dimension not exceeding $3$. Then the commuting graph of $\mathcal L$ is disconnected.
\end{corollary}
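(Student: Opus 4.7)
The plan is to combine Propositions~\ref{Prop:decomposable} and \ref{Prop.disconnected} via a short case analysis on whether $\mathcal L$ is indecomposable. Under the paper's standing convention that $\mathcal L$ is noncommutative, one has $\dim \mathcal L \in \{2,3\}$, so the dimension bound $\dim \mathcal L - \dim Z(\mathcal L) \leq \dim \mathcal L \leq 3$ holds automatically.

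If $\mathcal L$ is indecomposable, I would invoke Proposition~\ref{Prop.disconnected} directly: its hypothesis is met by the preceding observation, and its conclusion is exactly that $\Gamma(\mathcal L)$ is disconnected.

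If $\mathcal L$ is decomposable, I would write $\mathcal L = \mathcal L_1 \oplus \mathcal L_2$ with both summands nontrivial, and first rule out the case of Proposition~\ref{Prop:decomposable}(ii): any noncommutative Lie algebra has dimension at least $2$, so two noncommutative summands would force $\dim \mathcal L \geq 4$, contradicting $\dim \mathcal L \leq 3$. Hence at least one summand is abelian, and after absorbing all abelian indecomposable pieces into a single factor I would write $\mathcal L = \mathcal L_0 \oplus \mathbb F^k$ with $k \geq 1$ and $\mathcal L_0$ noncommutative. The same dimension bound forces $\dim \mathcal L_0 = 2$ and $k = 1$, so $\mathcal L_0$ is the $2$-dimensional non-abelian Lie algebra, which is automatically indecomposable (otherwise it would decompose as $\mathbb F \oplus \mathbb F$ and be abelian). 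Applying Proposition~\ref{Prop.disconnected} to $\mathcal L_0$ gives that $\Gamma(\mathcal L_0)$ is disconnected, and Proposition~\ref{Prop:decomposable}(i) lifts this conclusion to $\Gamma(\mathcal L)$.

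There is no real obstacle: the corollary is essentially a bookkeeping consequence of the two preceding propositions. The one point that warrants explicit mention is the dimension count excluding Proposition~\ref{Prop:decomposable}(ii), which follows at once from the strict inequality $\dim \mathcal L \leq 3 < 2+2$.
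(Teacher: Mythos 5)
Your proposal is correct and follows exactly the route the paper intends: the paper's own ``proof'' is the single sentence that the corollary follows immediately from Propositions~\ref{Prop:decomposable} and~\ref{Prop.disconnected}, and your case analysis (indecomposable via Proposition~\ref{Prop.disconnected}; decomposable forced to be $\mathcal L_0\oplus\mathbb F$ with $\mathcal L_0$ the two-dimensional non-abelian algebra, then Proposition~\ref{Prop:decomposable}(i)) is precisely the bookkeeping being left to the reader.
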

\section{A Process to determine the Connectedness of the Commuting Graph of a Noncommutative Lie Algebra} \label{sec:algorithm}




In this section, we will give a process to determine the connectedness, as well as an upper bound for the diameter of the commuting graph of any Lie algebra $\cal L$. 
Let ${S}$ be a subset of the vertex set $V_{\cal L}$ of $\Gamma(\mathcal L)$, the {\it closed neighborhood} of ${S}$, denoted by $N({S})$, is defined as the union of ${S}$ with the set of all vertices of $\Gamma(\mathcal L)$ adjacent to a vertex in ${S}$. That is,
\begin{equation*}
    N({S}):= 
    \left\{x \in \mathcal L \setminus Z(\mathcal L) \, | \, [x,s]=0 \text{ for some } s \in {S}\right\}.
\end{equation*}


\begin{definition}
    Let $\cal L$ be a noncommutative Lie algebra and let ${S}$ be a non-empty subset of the vertex set $V_{\cal L}$. For each natural number $k\geq 1$, we define by induction, the $k$-th closed neighborhood of $S$ as: $N^{k}(S) := N\left(N^{k-1}(S)\right)$ where $N^0(S):=S$. The series of neighborhoods of ${S}$ is then defined as follows:
    \begin{equation}
        N^0({S}) \subseteq N^1({S}) \subseteq N^2({S}) \subseteq N^3({S}) \subseteq N^4({S}) \subseteq \cdots
    \end{equation}
    If there is a natural number $k$ such that
    \begin{equation}
        N^{k-1}(S) \subsetneq N^{k}(S) = N^{k+1}(S),
    \end{equation}
    we say that the series of neighborhoods of $S$ terminates at $k$.
\end{definition}
For convenience, in the case where $S$ contains only one vertex, say ${S}:= \{s\}$, the $k$-th neighborhood of $S$ will be also denoted as $N^k(s)$. And the series of neighborhoods of $\{s\}$ is defined as a consequence. 



\begin{theorem} \label{theorem:diam<d}
    Let $\cal L$ be a Lie algebra and let ${S}$ be a non-empty subset of the vertex set $V_{\cal L}$ such that the subgraph of $\Gamma(\mathcal L)$ induced by $S$, denoted by $\Gamma(S)$, is connected. 
    Suppose that the series of closed neighborhoods of ${S}$ terminates at 
    a positive integer $d$. Then $\Gamma(\mathcal L)$ is connected if and only if $N^d({S})=V_{\cal L}$. Moreover, if $N^d({S})=V_{\cal L}$ then $\diam(\Gamma(\mathcal L)) \leq 2d + \diam(\Gamma(S))$.
\end{theorem}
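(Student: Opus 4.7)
My plan is to split the argument into the two implications for connectedness and then a separate distance estimate. The core observation is that because the series terminates at $d$, we have $N^d(S) = N^{d+1}(S)$, meaning no vertex outside $N^d(S)$ can be adjacent to any vertex inside $N^d(S)$ (otherwise it would lie in $N^{d+1}(S) = N^d(S)$, a contradiction). This will immediately yield the ``only if'' direction of connectedness: if $N^d(S) \subsetneq V_{\mathcal L}$, then the vertices outside $N^d(S)$ form a union of components disjoint from $S$, so $\Gamma(\mathcal L)$ is disconnected.

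For the ``if'' direction, assuming $N^d(S) = V_{\mathcal L}$, I would argue by induction on $k$ that every vertex in $N^k(S)$ is connected in $\Gamma(\mathcal L)$ to some vertex of $S$ by a path of length at most $k$. The base case $k=0$ is immediate since $N^0(S) = S$. For the inductive step, take $u \in N^k(S) \setminus N^{k-1}(S)$; by definition of the closed neighborhood, $u$ is adjacent to some $u_1 \in N^{k-1}(S)$, and $u_1 \neq u$ because $u \notin N^{k-1}(S)$. By induction, $u_1$ is joined to some $s \in S$ by a path, yielding a walk from $u$ to $s$ in $\Gamma(\mathcal L)$. Together with the assumed connectedness of $\Gamma(S)$, this shows that every vertex of $V_{\mathcal L}$ is connected to every other, proving $\Gamma(\mathcal L)$ is connected.

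For the diameter bound, I would refine the previous construction to produce an actual path (not just a walk) of length at most $k$. Given $u \in N^k(S)$, let $k_u$ be the smallest index with $u \in N^{k_u}(S)$. Picking $u_1 \in N^{k_u - 1}(S)$ adjacent to $u$, the minimality of $k_u$ forces $k_{u_1} \leq k_u - 1$, so iterating yields a strictly decreasing sequence of indices and therefore pairwise distinct vertices $u = u_0, u_1, \ldots, u_{k_u} \in S$ forming a path of length $k_u \leq d$. For any two vertices $u, v \in V_{\mathcal L}$, I obtain paths from $u$ and $v$ to vertices $s_u, s_v \in S$ of lengths at most $d$ each, and then a path of length at most $\diam(\Gamma(S))$ from $s_u$ to $s_v$ inside $\Gamma(S)$. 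Concatenating gives a walk of length at most $2d + \diam(\Gamma(S))$, from which I extract a path of length at most $2d + \diam(\Gamma(S))$, establishing the bound on $\diam(\Gamma(\mathcal L))$.

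The main technical care will be in guaranteeing that the concatenated walk can actually be turned into a path of the required length and that the descending chain produces distinct vertices; the first issue is handled by the standard observation that any walk contains a path between its endpoints of no greater length, and the second is guaranteed by the strict decrease of the index $k_u$ at each step. No further subtleties are expected beyond keeping the indices and the base case of the induction straight.
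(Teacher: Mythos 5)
Your proposal is correct and follows essentially the same route as the paper: the termination condition $N^{d}(S)=N^{d+1}(S)$ forbids edges leaving $N^d(S)$ (the paper phrases this as a proof by contradiction, you phrase it directly), and the diameter bound comes from descending through the minimal neighborhood indices to reach $S$, then crossing $\Gamma(S)$. Your extra care about distinctness of vertices in the descending chain and about extracting a path from the concatenated walk is a welcome tightening of details the paper leaves implicit, but it is not a different argument.
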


\begin{proof} 
\begin{itemize}
    \item[$(\Rightarrow)$] Assume that $N^d({S})=V_{\cal L}$.  Let $u, v$ be two distinct vertices in $V_{\cal L}$. We need to prove that there is path joining $u$ and $v$ having distance at most $2d+\diam(\Gamma(S))$. Indeed, let $k$ and $l$ be the smallest indices such that $u \in N^k(S)$ and $v\in N^l(S)$, respectively. Obviously, $k$ and $l$ are both upper bounded by $d$. Because $k$ is the smallest index such that $u\in N^k(S)$, we have $u \in N^{k}(S) \setminus N^{k-1}(\mathcal{S})$. If so, there is a vertex $u_1 \in N^{k-1}(S)$ such that $u$ and $u_1$ are adjacent. Moreover, we observe that $u_1$ must be an element in $N^{k-1}(S) \setminus N^{k-2}(S)$ (otherwise, $u \in N^{k-1}(S)$, a contradiction to the definition of $k$). Therefore, $\Gamma(\mathcal L)$ contains a path $\{u, \, u_1, \, \ldots, \, u_k\}$ where $u_k \in S$. 

    Similarly, $\Gamma(\mathcal L)$ contains a path $\{v, v_1, \ldots, v_l\}$ with $v_l\in S$. Since $\Gamma(S)$ is connected, there is a path joined $u_k$ and $v_l$. Therefore, $u$ and $v$ are joined by a path of length bounded above by $k+l+\diam(\Gamma(S)) \leq 2d+\diam(\Gamma(S))$, as required.

 
    \item[$(\Leftarrow)$] Assume that $N^d({S})\subsetneq V_{\cal L}$. We need to prove that $\Gamma(\mathcal L)$ is disconnected. By contradiction, suppose that $\Gamma(\mathcal L)$ is connected. Then, for any $u \in N^d({S})$ and $v \in V_{\cal L} \setminus N^d({S})$, there is a path $P: = \{u, u_1,\ldots, u_k, v\}$ in $\Gamma(\mathcal L)$. 
    It follows that  $v \in N^{d+k+1}(S)$. Since the series of closed neighborhoods of ${S}$ terminates at $d$, we obtain $v\in N^d(S)$, a contradiction. 
    So, $\Gamma(\mathcal L)$ is disconnected, as required.
\end{itemize} 
\end{proof}

\begin{corollary}\label{cor.<=2d}
    Let $\cal L$ be a Lie algebra and let $s$ be a vertex of its commuting graph. 
    Suppose that the series of closed neighborhoods of $s$ terminates at 
    a positive integer $d$. Then $\Gamma(\mathcal L)$ is connected if and only if $N^d({s})=V_{\cal L}$. Moreover, if $N^d({s})=V_{\cal L}$ then $\diam(\Gamma(\mathcal L)) \leq 2d$.
\end{corollary}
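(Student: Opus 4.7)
The plan is to deduce this corollary as a direct application of Theorem~\ref{theorem:diam<d} by taking $S = \{s\}$, where $s$ is the given vertex of $\Gamma(\mathcal L)$. First I would verify that $S = \{s\}$ satisfies the hypotheses of the theorem: since $S$ contains a single vertex, the induced subgraph $\Gamma(S)$ is a one-vertex graph, which is connected by convention (every pair of distinct vertices is vacuously joined by a path, as there is no such pair). Moreover, by the same vacuous reasoning, $\diam(\Gamma(S)) = 0$ since the supremum in the definition of diameter is taken over distinct pairs, of which there are none.

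With these observations in place, the first assertion of the corollary follows immediately from the first assertion of Theorem~\ref{theorem:diam<d}: since the series of closed neighborhoods of $S = \{s\}$ terminates at $d$, we have that $\Gamma(\mathcal L)$ is connected if and only if $N^d(s) = V_{\cal L}$. For the diameter bound, I would apply the second assertion of Theorem~\ref{theorem:diam<d}, which gives
\begin{equation*}
    \diam(\Gamma(\mathcal L)) \leq 2d + \diam(\Gamma(S)) = 2d + 0 = 2d,
\end{equation*}
exactly the upper bound claimed in the corollary.

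The only delicate point worth spelling out is the convention $\diam(\Gamma(\{s\})) = 0$, since one might worry that a one-vertex graph has no well-defined diameter. To avoid any ambiguity, an alternative route would be to replay the proof of Theorem~\ref{theorem:diam<d} in this special case: for any two distinct vertices $u, v \in V_{\cal L}$, choose minimal $k, l \leq d$ with $u \in N^k(s)$ and $v \in N^l(s)$, and build paths $\{u, u_1, \ldots, u_k\}$ and $\{v, v_1, \ldots, v_l\}$ terminating at $u_k = s = v_l$; concatenating these paths at $s$ produces a walk of length $k + l \leq 2d$ joining $u$ and $v$, from which the bound $\dist(u,v) \leq 2d$ follows. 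I do not anticipate any significant obstacle here; the corollary is essentially a specialization of the theorem to a singleton seed set.
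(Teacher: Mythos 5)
Your proposal is correct and matches the paper's intent exactly: the corollary is stated without a separate proof precisely because it is the specialization of Theorem~\ref{theorem:diam<d} to the singleton $S=\{s\}$, with $\Gamma(S)$ trivially connected and $\diam(\Gamma(S))=0$. Your extra care about the one-vertex diameter convention (and the fallback of replaying the theorem's argument) is sound but not needed beyond what the paper implicitly assumes.
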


\begin{example}
    Let $\mathcal L=\mathrm{M}_{2}(\mathbb F)$ be the usual Lie algebra of all $2\times 2$ matrices over the field $\mathbb F$. Then 
    \begin{equation}
        Z(\mathcal L)=\{\lambda I:\lambda \in \mathbb F\}.
    \end{equation}
    Moreover, by letting $A=\begin{bmatrix} 1 & 0 \\ 0 & 0 \end{bmatrix}$, we can check easily that 
    \begin{equation}
        \left[ \begin{bmatrix}
            a_{11} & a_{12} \\ a_{21} & a_{22}
        \end{bmatrix},  A \right] = 0 \iff 
        \begin{bmatrix}
            a_{11} & a_{12} \\ a_{21} & a_{22}
        \end{bmatrix} A 
        - A \begin{bmatrix}
            a_{11} & a_{12} \\
            a_{21} & a_{22}
            \end{bmatrix} = 0
            \iff 
            a_{12}=a_{21}=0.
    \end{equation}
    Therefore,
    \begin{align}
        N^1(A)  = \left\{\begin{bmatrix}
            a_{11} & 0 \\ 0 & a_{22}
        \end{bmatrix} \, | \, a_{11}, \, a_{22} \in \R; \, a_{11} \neq a_{22}\right\}.
    \end{align}
    Furthermore, it is elementary to check that
    \begin{equation}
        \begin{bmatrix}
            b_{11} & b_{12} \\ b_{21} & b_{22}
        \end{bmatrix} 
        \begin{bmatrix}
            a_{11} & 0 \\ 0 & a_{22}
        \end{bmatrix}
        - \begin{bmatrix}
            a_{11} & 0 \\ 0 & a_{22}
        \end{bmatrix}
        \begin{bmatrix}
            b_{11} & b_{12} \\
            b_{21} & b_{22}
        \end{bmatrix} = 0
        \iff 
        \left\{
        \begin{array}{ll}
            (a_{11}-a_{22})b_{12} =0,  \\
            (a_{11}-a_{22})b_{21} =0.
        \end{array}
        \right.
    \end{equation}
    It thus follows that
    \begin{equation}
        N^2(A)=N^1(A).
    \end{equation}
    In the other words, the series of neighborhoods of $A$ terminates at $k=1$. Moreover, $N^1(A) \subsetneq V_{\cal L}$. By Theorem~\ref{theorem:diam<d}, we conclude that the commuting graph of $\mathrm{M}_2(\mathbb F)$ is disconnected.

    Remark this result was proven in 2004 by Akbari \cite{Akbari2004}. Akbari also showed that the commuting graph of $\mathrm{M}_2(\mathbb F)$ is disconnected for every field $\mathbb F$ and each connected component of $\Gamma(\mathrm{M}_2(\mathbb F))$ is a complete graph. Moreover, in the case whether $\mathbb F$ is a finite field, the number of connected component if $|\mathbb F|^2+|\mathbb F|+1$, and its connected component is of the size $|\mathbb F|^2-|\mathbb F|$.
\end{example}  

Based on Corollary~\ref{cor.<=2d}, we propose a process to determine the connectedness of $\Gamma(\mathcal L)$ and an upper bound for the diameter of its commuting graph.

\begin{algorithm}[H] 
	\begin{algorithmic}[1]
\Require{The structure constants of the Lie brackets}
 \Ensure{The connectedness and an upper bound for the diameter of the commuting graph} 
 
\textbf{Step 1:} Set ${S}:=\{\text{ }\}$ and $d:=0$. Choose any vertex in the commuting graph, that is, an element $x \in \mathcal L \setminus Z(\mathcal L)$ and append this element to  $S$.

\textbf{Step 2:} Use the structure constants to determine the closed neighborhood of ${S}$:
\begin{equation}
    N({S})=\{y \in \mathcal L \setminus Z(\mathcal L): [y,s]=0; \, \text{ for some } s \in {S}\}.
\end{equation}

\textbf{Step 3:} 
\begin{itemize}
    \item If $N({S})={S} \subsetneq \mathcal L \setminus Z(\mathcal L)$ then return ``disconnected".

    \item  If $N({S})=\mathcal L \setminus Z(\mathcal L)$ then return ``connected" and upper bound $2(d+1)$.

    \item  If ${S} \subsetneq N({S}) \subsetneq \mathcal L \setminus Z(\mathcal L)$ then append $N({S})$ to ${S}$, set $d:=d+1$, and repeat from \textbf{Step 2}.
\end{itemize}


\end{algorithmic}
	\caption{{\bf ConnectednessDiameter} (Determining the connectedness and an upper bound for the diameter of the commuting graph of a Lie algebra $\mathcal L$)}
	\label{algorithm}  
\end{algorithm}

\begin{remark}
    The process may not stop if we are not sure that the series of neighborhoods of ${\cal S}$ terminates. However, if $\cal L$ is finite-dimensional  over a finite field $\mathbb F$, then the commuting graph is finite, and therefore Process \ref{algorithm} stops at a finite iteration.
\end{remark}

\begin{example}\label{examplen411}
    Let $\mathfrak s_{4,11}$ be a Lie algebra defined over a field $\mathbb F$ as follows:
    \begin{equation}
        \mathfrak s_{4,11} = \left< e_1, e_2, e_3, e_4 \right> \colon [e_2,e_3]=e_1, \, [e_4,e_1] = e_1, \, [e_4,e_2]=e_2.
    \end{equation}
    It is easy to see that the the center of $\mathfrak s_{4,11}$ is just the trivial vector space. Therefore, each non-zero vector of 
    $\mathfrak s_{4,11}$ is a vertex of the corresponding commuting graph. Choose the vertex $e_4$ as the starting point to apply Process \ref{algorithm}. 
    Since
    \begin{equation}
        [a_1e_1+a_2e_2+a_3e_3+a_4e_4,e_4]= 0 \iff a_1=a_2=0,
    \end{equation}
    we have
    \begin{equation}
        N^1(e_4)=\left\{ a_3e_3+a_4e_4 \mid (a_3,a_4) \neq (0,0)\right\}.
    \end{equation}
    Besides, it is elementary to check that
    \begin{equation}\label{eq.ex19.41}
        [x_1e_1+x_2e_2+x_3e_3+x_4e_4,a_3e_3+a_4e_4] = 0 \iff -x_1a_4+x_2a_3=x_2a_4=0.
    \end{equation}
    Remark that
    \begin{equation}
        -x_1a_4+x_2a_3=x_2a_4=0 \iff \left[
        \begin{array}{ll}
            (x_1=0,x_2=0), & \text{if }  a_4 \neq 0, \\
            (x_1=x_1,x_2=0), & \text{if } a_3 \neq 0 = a_4, \\
            (x_1=x_1,x_2=x_2), & \text{if } a_3 = 0 = a_4. \\
        \end{array}
        \right.
    \end{equation}
    Therefore, we observe that
    \begin{equation}\label{eq.N2(e4)}
        N^2(e_4)=\left\{ a_1e_1+a_3e_3+a_4e_4 \mid (a_1, a_3, a_4) \neq (0,0,0)\right\}
    \end{equation}
    Similarly, we also have
    \begin{equation}
        [x_1e_1+x_2e_2+x_3e_3+x_4e_4,a_1e_1+a_3e_3+a_4e_4] = 0 \iff -x_1a_4+x_2a_3+x_4a_1=x_2a_4=0.
    \end{equation}
    And for any $(a_1,a_3,a_4) \neq (0,0,0)$, we may see that
    \begin{equation}\label{eq.ex19.45}
        \left\{
        \begin{array}{r}
            -x_1a_4+x_2a_3+x_4a_1=0 \\
            x_2a_4=0 
        \end{array} \right. \iff \left[
        \begin{array}{ll}
            (x_1=\dfrac{x_4a_1}{a_4},x_2=0,x_4=x_4), & \text{if }  a_4 \neq 0,  \\
            (x_1=x_1,x_2=x_2,x_4=\dfrac{-x_2a_3}{a_1}), & \text{if }  a_1 \neq 0 = a_4, \\
            (x_1=x_1,x_2=\dfrac{-x_4a_1}{a_3},x_4=x_4), & \text{if }  a_3 \neq 0= a_4, \\
        \end{array}
        \right.
    \end{equation}
    Therefore, $x_1e_1+x_2e_2+x_3e_3+x_4e_4$ is adjacent to $e_1-\dfrac{x_4}{x_2}e_3$ for any $(x_1,x_2,x_3,x_4)$ with $x_2 \neq 0$. Combining with Equation~\eqref{eq.N2(e4)}, we conclude that 
    \begin{equation}
        N^3(e_4)=\Gamma\left( \mathfrak s_{4,11} \right).
    \end{equation}
    According to Theorem~\ref{theorem:diam<d}, we observe that $\Gamma(\mathfrak s_{4,11})$ is connected and 
    \begin{equation}\label{eq.<=6}
        \diam(\Gamma(\mathfrak s_{4,11})) \leq 6.
    \end{equation}
    We now prove that the diameter of $\Gamma(\mathfrak s_{4,11})$ is equal to $3$. Indeed, it is easy to verify that
    \begin{equation}
        \left\{
            \begin{array}{ll}
                [x_1e_1+x_2e_2+x_3e_3+x_4e_4,e_2] = 0  
                \\
                {}
                [x_1e_1+x_2e_2+x_3e_3+x_4e_4,e_4] = 0
            \end{array}
        \right. \iff 
        \left\{
            \begin{array}{ll}
                x_3e_1+x_4e_2=0  \\
                 x_1e_1+x_2e_2= 0
            \end{array}
        \right. \iff 
        \left\{
            \begin{array}{ll}
                x_3=x_4=0  \\
                 x_1=x_2= 0
            \end{array}
        \right.
    \end{equation}
    In other words, there is no vertex adjacent to both $e_2$ and $e_4$. 
    Therefore, the length of any path joining $e_2$ and $e_4$ must be at least 3. It follows that
    \begin{equation}\label{eq.ex19.>=3}
        \diam(\Gamma(\mathfrak s_{4,11})) \geq 3.
    \end{equation}
    Besides, we observe from Equations~\eqref{eq.ex19.41} and \eqref{eq.ex19.45} that 
    \begin{equation}
        \left\{ 
            \begin{array}{ll}
                [x_1e_1+x_3e_3+x_4e_4,e_3] = 0, 
                \\
                {}[x_1e_1+x_2e_2+x_3e_3+x_4e_4,x_2e_1-x_4e_3]  = 0
            \end{array}
        \right.
    \end{equation}
    for every $x_1, x_2, x_3, x_4 \in \mathbb F$. 
    Therefore, for two arbitrary vertices $u$ and $v$ in $\Gamma(\mathfrak s_{4,11})$, there are two vertices of the forms $ae_1+be_3$ and $ce_1+de_3$ such that $\{u, ae_1+be_3,ce_1+de_3,v\}$ is a path of $\Gamma(\mathfrak s_{4,11})$. That is,
    \begin{equation}\label{eq.ex19.<=3}
        \diam(\Gamma(\mathfrak s_{4,11})) \leq 3.
    \end{equation}
    Combining this with Equation~\eqref{eq.ex19.>=3}, we get
    \begin{equation}
        \diam(\Gamma(\mathfrak s_{4,11}))=3.
    \end{equation} 
\end{example}
To close this section, we remark that the speed of Process~\ref{algorithm} depends on the starting set of vertices. For instance, in Example~\ref{examplen411}, we may verify by the same method that $N^2(e_1)=\Gamma(\mathfrak s_{4,11})$. Therefore, if we choose $S=\{e_1\}$, we just need two iterations to obtain the desired equality $N^2(e_1)=\Gamma(\mathfrak s_{4,11})$. Thus, it gives a better upper bound: $\diam(\mathfrak s_{4,11}) \leq 4$ compared to Equation~\eqref{eq.<=6}. This results comes from a fact that $\dim(\ker(\ad_{e_1})) = 3 > \dim(\ker(\ad_{e_4}))=2$.
\section{Connectedness and Diameters of commuting graphs of Lie algebras having small-dimensional derived algebras} \label{sec:one-two-dimension}

In this section, we will determine the diameters of commuting graphs of finite-dimensional Lie algebras having either one- or two-dimensional derived algebras. For convenience, we will denote by $\mathrm{Lie}(n,k)$ the family of all Lie algebras of dimension $n$, which have derived algebras of dimension $k$. As usual, the derived Lie algebra of a Lie algebra $\mathcal L$ will be denoted by $\mathcal L^1$.  
We first show in the following proposition that most of the commuting graphs of Lie algebras having small-dimensional derived algebras have diameter 2.

\begin{proposition}\label{pro:m<n-2k}
    Let $\cal L$ be a Lie algebra of dimension $n$ such that $\dim Z(\mathcal L) = m$ and $\dim \mathcal L^1 = k$.
    \begin{itemize}
        \item[$(i)$] If $k<m$ then $\cal L$ is decomposable.
        \item[$(ii)$] If $m<n-2k$ then $\Gamma(\mathcal L)$ is connected, and $\diam(\Gamma(\mathcal L))=2$.
    \end{itemize}
\end{proposition}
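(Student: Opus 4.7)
The plan is to treat the two parts separately since they call for quite different techniques: (i) is a structural peeling-off argument exploiting the mismatch between the center and the derived algebra, while (ii) reduces to a rank--nullity count on inner derivations.

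For (i), I would start from the inequality $\dim(Z(\mathcal{L}) \cap \mathcal{L}^1) \leq \dim \mathcal{L}^1 = k < m$ and choose a subspace $W \subseteq Z(\mathcal{L})$ complementary to $Z(\mathcal{L}) \cap \mathcal{L}^1$ inside $Z(\mathcal{L})$. Then $W$ is central, nonzero of dimension at least $m-k \geq 1$, and satisfies $W \cap \mathcal{L}^1 = \{0\}$. Because of this disjointness, I can extend $\mathcal{L}^1$ to a vector-space complement $\mathcal{L}'$ of $W$ in $\mathcal{L}$. Since $\mathcal{L}^1 \subseteq \mathcal{L}'$, the inclusion $[\mathcal{L}',\mathcal{L}'] \subseteq \mathcal{L}^1 \subseteq \mathcal{L}'$ makes $\mathcal{L}'$ a subalgebra, and $W \subseteq Z(\mathcal{L})$ forces $[\mathcal{L}',W] = 0$. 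Thus $\mathcal{L} = \mathcal{L}' \oplus W$ is a genuine Lie-algebra direct sum; $W \neq 0$ by construction, and $\mathcal{L}' \neq 0$ since otherwise $\mathcal{L} = W \subseteq Z(\mathcal{L})$ would be commutative, contradicting the standing assumption. The only place that needs care is the existence of $\mathcal{L}'$ containing $\mathcal{L}^1$, which is precisely what $W \cap \mathcal{L}^1 = \{0\}$ guarantees.

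For (ii), the crucial observation is that both $\ad_x$ and $\ad_y$ map into $\mathcal{L}^1$, so each has rank at most $k$ and therefore kernel of dimension at least $n-k$. The usual dimension bound gives
\begin{equation*}
\dim(\ker\ad_x \cap \ker\ad_y) \geq 2(n-k) - n = n - 2k > m = \dim Z(\mathcal{L}),
\end{equation*}
so I can pick $z \in \ker\ad_x \cap \ker\ad_y$ with $z \notin Z(\mathcal{L})$, i.e., a vertex commuting with both $x$ and $y$. The one subtlety I expect is ensuring $z$ is distinct from $x$ and $y$ so that $\{x,z,y\}$ is a legitimate path: if $[x,y] = 0$ already, then $\dist(x,y) = 1$ and nothing further is needed; if $[x,y] \neq 0$, then $x \notin \ker\ad_y$ and $y \notin \ker\ad_x$, so neither $x$ nor $y$ lies in the intersection and any chosen $z$ is automatically different from both. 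This gives $\dist(x,y) \leq 2$ for every pair, so $\Gamma(\mathcal{L})$ is connected with $\diam(\Gamma(\mathcal{L})) \leq 2$. Combining with the standing remark that $\diam \geq 2$ whenever the commuting graph of a noncommutative Lie algebra is connected, the equality $\diam(\Gamma(\mathcal{L})) = 2$ follows. The rank--nullity step is routine, and the distinctness check described above is the only mild obstacle.
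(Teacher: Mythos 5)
Your proposal is correct and follows essentially the same route as the paper: part (i) peels off a central complement avoiding $\mathcal L^1$ (the paper uses a single central element $x\notin\mathcal L^1$ where you use the whole complement $W$, an immaterial difference), and part (ii) is the identical rank--nullity count showing $\ker(\ad_u)\cap\ker(\ad_v)$ exceeds $Z(\mathcal L)$ in dimension. Your extra care about the common neighbour being distinct from $u$ and $v$ is a detail the paper leaves implicit, but it does not change the argument.
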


\begin{proof}
    If $k<m$ then $Z(\mathcal L) \setminus \mathcal L^1 \neq \emptyset$.  
    This yields the existence of an element $x \in \mathcal L \setminus \mathcal L^1$ such that $[x,L]=0$. In the other words, $\mathcal L$ is decomposed into $\mathcal L' \oplus \left< x \right>$, as required.

    Now, suppose that $m<n-2k$. Let $u$ and $v$ be two arbitrary vertices in $\Gamma(\mathcal L)$. Then 
    \begin{equation}
        \dim \left(\ker(\ad_u)\right) \geq n-\dim \left(\mathrm{Im}(\ad_u)\right) \geq n-\dim \mathcal L^1 = n-k.
    \end{equation}
    Similarly, $\dim \left(\ker (\ad_v)\right) \geq n-k$. Therefore, 
    \begin{equation}
        \dim \left(\ker (\ad_u)  \cap \ker(\ad_v) \right) \geq n-2k>m.
    \end{equation}
    It follows that there exists a vertex $w \in \Gamma(\mathcal L)$ such that $w \in \ker(\ad_u) \cap \ker (\ad_v)$. In other words, $\{u, w, v\}$ is a path, which has distance $2$. Therefore,
    \begin{equation}
        \dist(u,v) \leq 2 \text{ for every } u, v \in \Gamma(\mathcal L).
    \end{equation}
    This proves the second part of the proposition.
\end{proof}

According to the above proposition, the majority of commuting graphs of Lie algebras with small-dimensional derived algebras are connected and have a diameter of two.   
For example, $\Gamma(\mathcal L)$ is connected, and $\diam(\Gamma(\mathcal L))=2$ if either $\dim \mathcal L^1 =1$ and $\dim Z(\mathcal L)<n-2$, or $\dim \mathcal L^1 =2$ and $\dim Z(\mathcal L)<n-4$.

\begin{lemma}\label{lem:centre<n-2}
    Let $\mathcal{L}$ be a noncommutative Lie algebra of finite dimension. Then $\dim Z(\mathcal L) \leq n-2.$
\end{lemma}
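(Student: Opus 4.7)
The plan is to argue by contradiction. Suppose $\dim Z(\mathcal{L}) \geq n-1$. Since $Z(\mathcal{L})$ is a subspace of $\mathcal{L}$, either $\dim Z(\mathcal{L}) = n$ or $\dim Z(\mathcal{L}) = n-1$. The first case immediately gives $\mathcal{L} = Z(\mathcal{L})$, so $\mathcal{L}$ is commutative, contradicting the hypothesis.

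For the remaining case $\dim Z(\mathcal{L}) = n-1$, I would pick any vector $x \in \mathcal{L} \setminus Z(\mathcal{L})$ and use the fact that $\mathcal{L}$ decomposes as a vector space direct sum $\mathcal{L} = Z(\mathcal{L}) \oplus \langle x \rangle$. Then any two elements of $\mathcal{L}$ can be written as $z_1 + \lambda_1 x$ and $z_2 + \lambda_2 x$ with $z_1, z_2 \in Z(\mathcal{L})$ and $\lambda_1, \lambda_2 \in \mathbb{F}$. Expanding the bracket bilinearly gives
\begin{equation*}
    [z_1 + \lambda_1 x, \, z_2 + \lambda_2 x] = [z_1,z_2] + \lambda_2[z_1,x] + \lambda_1[x,z_2] + \lambda_1\lambda_2[x,x].
\end{equation*}
The first three terms vanish because $z_1, z_2 \in Z(\mathcal{L})$, and the last vanishes by antisymmetry $[x,x]=0$. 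Hence every Lie bracket is zero, making $\mathcal{L}$ commutative, again contradicting the hypothesis.

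There is no serious obstacle here; the whole argument is a short two-case analysis relying only on the definition of the center, the antisymmetry of the bracket, and elementary linear algebra. If I wanted to be even more concise, I could observe that $\dim \mathcal{L}/Z(\mathcal{L}) = 1$ forces $\mathcal{L}/Z(\mathcal{L})$ to be a one-dimensional (hence abelian) Lie algebra, and then invoke the standard fact that if $\mathcal{L}/Z(\mathcal{L})$ is cyclic then $\mathcal{L}$ is abelian, which yields the same contradiction.
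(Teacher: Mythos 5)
Your argument is correct and is essentially the same as the paper's: both proofs observe that if $Z(\mathcal L)$ had codimension at most one, then a basis (or a single complementary vector $x$) together with centrality and the antisymmetry $[x,x]=0$ forces every bracket to vanish, contradicting noncommutativity. The bilinear expansion you write out is just a more explicit rendering of the paper's one-line observation that $[e_i,e_j]=0$ for all basis pairs.
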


\begin{proof}
    By contradiction, suppose that $\dim Z(\mathcal{L})=\dim \mathcal L-1$. If so, there is a basis $e_1, \ldots, e_{n}$ of $\mathcal{L}$ such that $Z(\mathcal L)=\left< e_1, \ldots, e_{n-1}\right>$. It follows that $[e_i, e_j]=0$ for every $1 \leq i \leq j \leq n$. That is, $\mathcal L$ is commutative, a contradiction.
\end{proof}

\begin{lemma}\label{lemma:Lie(n,1)}
    Let $\mathcal L$ be a Lie algebra in the family $\mathrm{Lie}(n,1)$ such that $\dim Z(\mathcal{L})=n-2$. 
    Then $\Gamma(\mathcal L)$ is disconnected.
\end{lemma}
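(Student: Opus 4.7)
The plan is to mimic Case~1 of the proof of Proposition~\ref{Prop.disconnected}, observing that the argument there never actually uses the indecomposability hypothesis, and that our assumption $\dim Z(\mathcal L) = n-2$ places us in exactly that setting.

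First, I would pick a basis $\{e_3, \ldots, e_n\}$ of $Z(\mathcal L)$ and extend it to a basis $\{e_1, e_2, e_3, \ldots, e_n\}$ of $\mathcal L$. Every vertex of $\Gamma(\mathcal L)$ then has the unique form $a_1 e_1 + a_2 e_2 + v$ with $(a_1, a_2) \neq (0,0)$ and $v \in Z(\mathcal L)$. Since $e_3, \ldots, e_n$ all lie in $Z(\mathcal L)$, the only potentially nonzero basic bracket is $[e_1, e_2]$, and this bracket is nonzero because $\mathcal L$ is noncommutative; this is consistent with, and in fact recovers, the hypothesis $\dim \mathcal L^1 = 1$.

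Next I would expand the Lie bracket of two arbitrary vertices to obtain
\[
    [a_1 e_1 + a_2 e_2 + v,\, b_1 e_1 + b_2 e_2 + w] \;=\; (a_1 b_2 - a_2 b_1)\,[e_1, e_2],
\]
so two vertices commute if and only if $a_1 b_2 - a_2 b_1 = 0$. Equivalently, adjacency is controlled by the projective direction of $(a_1, a_2) \in \mathbb F^2$, and this invariant is preserved along any path. Taking the two distinct directions $[1:0]$ and $[0:1]$ yields disjoint, nonempty sets
\[
    S_1 = \{a e_1 + v : a \in \mathbb F \setminus \{0\},\, v \in Z(\mathcal L)\}, \qquad S_2 = \{b e_2 + w : b \in \mathbb F \setminus \{0\},\, w \in Z(\mathcal L)\},
\]
between which no edge of $\Gamma(\mathcal L)$ can run, so I would conclude that $\Gamma(\mathcal L)$ is disconnected.

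The only point requiring care, rather than a genuine obstacle, is that the lemma does not assume $\mathcal L$ to be indecomposable, so neither Proposition~\ref{Prop.disconnected} nor Corollary~\ref{corollary} can be invoked verbatim; however the argument above rests only on the dimensional hypothesis $\dim Z(\mathcal L) = n-2$ together with noncommutativity, and therefore applies to every $\mathcal L \in \mathrm{Lie}(n,1)$ satisfying the lemma's assumptions.
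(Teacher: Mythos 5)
Your proof is correct and is essentially the paper's own argument: the paper proves this lemma by simply citing Proposition~\ref{Prop.disconnected}, whose Case~1 ($\dim\mathcal L-\dim Z(\mathcal L)=2$) is exactly the computation you write out. Your observation that the indecomposability hypothesis of that proposition is never used in Case~1 is accurate and in fact patches the only formal gap in the paper's one-line citation, since the lemma as stated does not assume indecomposability.
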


\begin{proof}
    It is direct from Proposition~\ref{Prop.disconnected}.
\end{proof}

Besides, it is proven in \cite[Theorem 1]{Schobel1993} that an indecomposable Lie algebra over the real field in the family $\mathrm{Lie}(n,1)$ is isomorphic to one of the followings: 
    \begin{align}
        \mathfrak n_{2,1}:=\left < e_1, e_2 \right> \colon &  [e_1,e_2]=e_1, \label{eq.n21}\\
        \mathfrak h_{2m+1}:=\left< e_1, e_2, e_{3}, \ldots, e_{2m+1}\right> \colon & [e_1,e_{1+m}]=[e_2,e_{2+m}]=\cdots = [e_m,e_{2m}] = e_{2m+1}. \label{eq.h3}
    \end{align}
The proof given in \cite[Theorem 1]{Schobel1993} is also available on every field. Therefore, any Lie algebra in the family $\mathrm{Lie}(n,1)$ must be isomorphic to one of the Lie algebras given in Equations~\eqref{eq.n21} and \eqref{eq.h3}.
Note that $Z(\mathfrak h_{2m+1}) = \left< e_{2m+1}\right>$. Therefore, by directly applying Proposition~\ref{pro:m<n-2k} and Lemma~\ref{lemma:Lie(n,1)}, we get the following theorem.

\begin{theorem}\label{theorem:Lie(n,1)}
    Let $\mathcal L$ be 
    an indecomposable Lie algebra in the family $\mathrm{Lie}(n,1)$. Then the commuting graph of $\mathcal L$ is connected if and only if $\mathcal L$ is isomorphic to $\mathfrak h_{2m+1}$ for some $m\geq 2$. Moreover,  the following equality holds for every $m \geq 2$:
    \begin{equation}
        \diam\left(\Gamma(\mathfrak h_{2m+1}) \right)=2.
    \end{equation}
    
\end{theorem}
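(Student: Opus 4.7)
The plan is to combine the classification cited just before the theorem with the dimension-counting results that have already been established. By the classification from \cite{Schobel1993}, an indecomposable $\mathcal L \in \mathrm{Lie}(n,1)$ must be isomorphic to either $\mathfrak n_{2,1}$ or $\mathfrak h_{2m+1}$ for some $m \geq 1$, so I just need to dispose of each case.

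First I would handle the two disconnected cases. For $\mathfrak n_{2,1}$, the Lie algebra has dimension $2 \leq 3$, so $\Gamma(\mathfrak n_{2,1})$ is disconnected by Corollary~\ref{cor.dim<=3}. For $\mathfrak h_3$ (the case $m=1$), we have $n=3$ and $Z(\mathfrak h_3) = \langle e_3\rangle$, so $\dim Z(\mathfrak h_3) = 1 = n-2$, and Lemma~\ref{lemma:Lie(n,1)} immediately gives that $\Gamma(\mathfrak h_3)$ is disconnected. This shows that whenever $\mathcal L$ is an indecomposable member of $\mathrm{Lie}(n,1)$ which is not isomorphic to $\mathfrak h_{2m+1}$ for some $m \geq 2$, its commuting graph is disconnected.

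Next I would prove the positive direction for $\mathfrak h_{2m+1}$ with $m \geq 2$. Here $n = 2m+1 \geq 5$, $\dim \mathcal L^1 = 1$ (so $k=1$), and from the bracket relations~\eqref{eq.h3} it is immediate that $Z(\mathfrak h_{2m+1}) = \langle e_{2m+1}\rangle$, giving $\dim Z(\mathcal L) = 1$. The numerical hypothesis $\dim Z(\mathcal L) < n - 2 \dim \mathcal L^1$ of Proposition~\ref{pro:m<n-2k}(ii) then reads $1 < 2m - 1$, which holds for every $m \geq 2$. Applying Proposition~\ref{pro:m<n-2k}(ii) yields that $\Gamma(\mathfrak h_{2m+1})$ is connected with $\diam(\Gamma(\mathfrak h_{2m+1})) \leq 2$. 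Combined with the lower bound $\diam(\Gamma(\mathcal L)) \geq 2$ recorded in the Remark after the definition of the commuting graph (valid for any noncommutative $\mathcal L$ with connected commuting graph), this forces equality.

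There is no real obstacle here: the heavy lifting is done by the classification of $\mathrm{Lie}(n,1)$ from \cite{Schobel1993}, by Lemma~\ref{lemma:Lie(n,1)}/Corollary~\ref{cor.dim<=3} for the disconnected cases, and by Proposition~\ref{pro:m<n-2k}(ii) for the connected case. The only point requiring care is checking the numerical hypothesis $\dim Z(\mathcal L) < n - 2\dim \mathcal L^1$ for $\mathfrak h_{2m+1}$ and noting explicitly why $m=1$ must be excluded (the inequality $1 < 2m-1$ fails at $m=1$ and, consistently, $\mathfrak h_3$ falls under Lemma~\ref{lemma:Lie(n,1)}).
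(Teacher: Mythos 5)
Your proposal is correct and follows essentially the same route as the paper: the classification of $\mathrm{Lie}(n,1)$ from \cite{Schobel1993}, Lemma~\ref{lemma:Lie(n,1)} (or equivalently Corollary~\ref{cor.dim<=3}) for the disconnected cases $\mathfrak n_{2,1}$ and $\mathfrak h_3$, and Proposition~\ref{pro:m<n-2k}(ii) with the check $1 < 2m-1$ for the connected case $m\geq 2$. You have simply made explicit the case analysis that the paper compresses into ``by directly applying Proposition~\ref{pro:m<n-2k} and Lemma~\ref{lemma:Lie(n,1)}.''
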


\begin{remark} Note that the centers of $\mathfrak n_{2,1}$ and $\mathfrak h_3$ defined in Equations~\eqref{eq.n21} and \eqref{eq.h3} are:
\begin{equation}
    Z(\mathfrak n_{2,1}) = \{0\}, \quad \text{ and } \quad Z(\mathfrak h_3)=\left< e_3\right>.
\end{equation}
\begin{itemize}
    \item Let $a_1e_1+a_2e_2$ and $b_1e_1+b_2e_2$ be two vertices of $\Gamma(\mathfrak n_{2,1})$. Then 
    \begin{equation}\label{eq41}
        [a_1e_1+a_2e_2,b_1e_1+b_2e_2] = 0 \iff a_1b_2-a_2b_1=0.
    \end{equation}
    Let $S_{1,a}:=\{\lambda(e_1+ ae_2) \mid 0 \neq \lambda\in \mathbb F\}$ and $S_2:=\{\lambda e_2 \mid 0 \neq \lambda\in \mathbb F\}$. Then we may conclude from Equation~\eqref{eq41} that each subgraph of $\Gamma(\mathfrak n_{2,1})$ induced by $S_{1,a}$ (for some $a \in \mathbb F$) or by $S_2$ is a connected component of $\Gamma(\mathfrak n_{2,1})$. In particular, if $\mathbb F$ is finite, then the number of connected components of $\Gamma(\mathfrak n_{2,1})$ is $(|\mathbb F|+1)$ and each connected component is a complete graph of $|\mathbb F|-1$ vertices.

    \item Similarly, for two distinct vertices $a_1e_1+a_2e_2+a_3e_3$ and $b_1e_1+b_2e_2+b_3e_3$ of $\Gamma(\mathfrak h_3)$, we can check that
    \begin{equation}
        [a_1e_1+a_2e_2+a_3e_3,b_1e_1+b_2e_2+b_3e_3] = 0 \iff a_1b_2-a_2b_1=0.
    \end{equation}
    Now, let $S_{1,a}:=\{\lambda(e_1+ae_2+be_3) \mid 0 \neq \lambda \in \mathbb F, b \in \mathbb F\}$, and $S_{2}:=\{\lambda(e_2+be_3) \mid 0 \neq \lambda \in \mathbb F, b \in \mathbb F\}$ for some $a \in \mathbb F$. Then the subgraph induced by one of the sets $S_{1,a}$ and $S_2$, is a connected component of $\Gamma(\mathfrak h_3)$. In particular, if $\mathbb F$ is finite, then $\Gamma(\mathfrak h_3)$ has $(|\mathbb F|+1)$ connected components, and each connected component is a complete graph of $|\mathbb F|(|\mathbb F|-1)$ vertices. 
\end{itemize}
    
\end{remark}

Next, we study the case where $\mathcal L$ belongs to the family $\mathrm{Lie}(n,2)$. If $\dim Z(\mathcal{L}) < n-4$ then $\Gamma(\mathcal L)$ is connected and $\diam (\Gamma(\mathcal L)) = 2$. By Lemma~\ref{lem:centre<n-2}, $\dim Z(\mathcal{L}) \leq  n-2$ unless $\mathcal{L}$ is commutative. If $\dim Z(\mathcal{L})=n-2$ then there is a basic $\{e_1, \ldots, e_n \}$ 
of $\mathcal{L}$ such that $[e_{i},e_j] = 0$ for every $i<j$ with $(i,j) \neq  (n-1, n).$ It follows that $\dim \mathcal L^1 \leq 1$, a contradiction. 
This proves the following lemma.

\begin{lemma}\label{lem:dim<n-3}
    Let $\mathcal{L}$ be a noncommutative Lie algebra in the family $\mathrm{Lie}(n,2)$. Then $\dim Z(\mathcal L) \leq n-3$.
\end{lemma}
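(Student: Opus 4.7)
The plan is to proceed by contradiction, combining the upper bound already available from Lemma~\ref{lem:centre<n-2} with a dimension count on the derived algebra. Since $\mathcal{L}$ is noncommutative, Lemma~\ref{lem:centre<n-2} yields $\dim Z(\mathcal{L}) \leq n-2$, so the only remaining case to exclude is $\dim Z(\mathcal{L}) = n-2$. I would assume this equality for contradiction and then show that it forces $\dim \mathcal{L}^1 \leq 1$, which contradicts $\mathcal{L} \in \mathrm{Lie}(n,2)$.

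Concretely, I would start by choosing a basis $\{e_1, \ldots, e_{n-2}\}$ of $Z(\mathcal{L})$ and extending it to a basis $\{e_1, \ldots, e_n\}$ of $\mathcal{L}$. Since $[z,\mathcal{L}]=0$ for every $z \in Z(\mathcal{L})$, the bracket $[e_i,e_j]$ automatically vanishes whenever $\min(i,j) \leq n-2$. Among the $\binom{n}{2}$ basis brackets, this leaves only $[e_{n-1}, e_n]$ as possibly nonzero. Because $\mathcal{L}^1$ is spanned by $\{[e_i,e_j] : i<j\}$ (bilinearity plus antisymmetry of the Lie bracket), it follows that $\mathcal{L}^1 \subseteq \langle [e_{n-1},e_n] \rangle$, giving $\dim \mathcal{L}^1 \leq 1$, the desired contradiction.

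I do not expect any genuine obstacle here: the argument is a direct parallel to the proof of Lemma~\ref{lem:centre<n-2}, with the extra ingredient being only that the unique candidate basis bracket $[e_{n-1}, e_n]$ contributes at most one dimension to $\mathcal{L}^1$. The whole proof should fit in a couple of lines.
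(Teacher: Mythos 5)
Your proposal is correct and follows essentially the same route as the paper: the paper also reduces to the case $\dim Z(\mathcal L)=n-2$ via Lemma~\ref{lem:centre<n-2}, picks a basis in which only $[e_{n-1},e_n]$ can be nonzero, and concludes $\dim\mathcal L^1\leq 1$, a contradiction. Nothing is missing.
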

Now, applying directly Proposition~\ref{Prop.disconnected}, we get the following result.

\begin{lemma}
    Let $n$ be an integer such that $n\geq 3$ and let $\cal L$ be a Lie algebra in the family $\mathrm{Lie}(n,2)$ such that $\dim Z(\mathcal L)=n-3$. Then $\Gamma(\mathcal L)$ is disconnected.
\end{lemma}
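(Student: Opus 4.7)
The plan is to reduce to Proposition~\ref{Prop.disconnected} by first observing that the hypothesis translates into a statement about the codimension of the center. Since $\dim \mathcal L = n$ and $\dim Z(\mathcal L) = n-3$, we have $\dim \mathcal L - \dim Z(\mathcal L) = 3$, which matches the hypothesis of Proposition~\ref{Prop.disconnected} exactly. If $\mathcal L$ happens to be indecomposable, the conclusion is therefore immediate and nothing else needs to be done.

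The remaining work is to treat the case in which $\mathcal L$ is decomposable, since Proposition~\ref{Prop.disconnected} is stated only for indecomposable Lie algebras. First, I would show that $\mathcal L$ cannot be written as $\mathcal L_1 \oplus \mathcal L_2$ with both $\mathcal L_1$ and $\mathcal L_2$ noncommutative. Indeed, as recorded inside the proof of Proposition~\ref{Prop.disconnected}, any noncommutative finite-dimensional Lie algebra satisfies $\dim \mathcal L_i - \dim Z(\mathcal L_i) \geq 2$, and by Equation~\eqref{eq19} the codimensions of the centers add up. That would force $\dim \mathcal L - \dim Z(\mathcal L) \geq 4$, contradicting the equality $3$ above.

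Consequently, the only way $\mathcal L$ can be decomposable is as $\mathcal L = \mathcal L_1 \oplus \mathbb F^{k}$ for some noncommutative subalgebra $\mathcal L_1$ and some $k \geq 1$; iterating this decomposition, I can take $\mathcal L_1$ to be indecomposable. Proposition~\ref{Prop:decomposable}(i) then reduces the connectedness question for $\Gamma(\mathcal L)$ to that of $\Gamma(\mathcal L_1)$. Because $Z(\mathcal L) = Z(\mathcal L_1) \oplus \mathbb F^{k}$ and $\mathcal L_1^1 = \mathcal L^1$, the indecomposable Lie algebra $\mathcal L_1$ satisfies $\dim \mathcal L_1 - \dim Z(\mathcal L_1) = 3$ as well, so Proposition~\ref{Prop.disconnected} applies to $\mathcal L_1$ and yields that $\Gamma(\mathcal L_1)$, hence $\Gamma(\mathcal L)$, is disconnected.

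The bookkeeping of the decomposable reduction is the only mildly delicate point; everything else is a direct invocation of the already-proved Propositions~\ref{Prop:decomposable} and \ref{Prop.disconnected}, and the hypothesis $\dim \mathcal L^1 = 2$ is used only insofar as it (together with the codimension of the center) precludes a split into two noncommutative summands.
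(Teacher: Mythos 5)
Your proof is correct and follows the same route as the paper, which justifies the lemma with the single phrase ``applying directly Proposition~\ref{Prop.disconnected}''. In fact your version is more complete: Proposition~\ref{Prop.disconnected} is stated only for \emph{indecomposable} algebras, and your codimension count ruling out a split into two noncommutative summands (together with the reduction of the remaining case $\mathcal L = \mathcal L_1 \oplus \mathbb F^k$ via Proposition~\ref{Prop:decomposable}(i)) supplies a step the paper leaves implicit.
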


Next, we will classify all the Lie algebras in the family $\mathrm{Lie}(n,2)$ having centers of codimension 4 and determine the diameters of their commuting graphs.

\begin{theorem}\label{theorem:Lie(n,2)-classification}
    Let $n$ be an integer such that $n\geq 4$ and let $\cal L$ be a Lie algebra in the family $\mathrm{Lie}(n,2)$ such that $\dim Z(\mathcal L)=n-4$. Then $\mathcal L = \mathcal M \oplus \mathbb{F}^{n-k}$ for some integer $4 \leq k\leq 6$, and some Lie algebra, $\mathcal M$, belonging to the family $\mathrm{Lie}(k,2)$. Moreover, $\mathcal M$  is isomorphic to one of the following Lie algebras:
    \begin{align}
        \mathcal M_{1,k} = \left< e_1, e_2, \ldots, e_k \right> \colon & [e_1, e_2] = \sum_{i=1}^k a_i e_i, \, [e_3, e_4] = \sum_{i=1}^k b_i e_i \label{Lie.M1}\\
        \mathcal M_{2,k} = \left< e_1, e_2, \ldots, e_k \right> \colon & [e_1, e_2] = \sum_{i=1}^k a_i e_i, \, [e_1, e_3] = \sum_{i=1}^k b_i e_i, \, [e_3,e_4] = \sum_{i=1}^k (a_i+\alpha b_i) e_i \label{Lie.M2}\\
        \mathcal M_{3,k} = \left< e_1, e_2, \ldots, e_k \right> \colon &  [e_1, e_2] = [e_2,e_4]=\sum_{i=1}^k a_i e_i, \, [e_1, e_3] = \sum_{i=1}^k b_i e_i, \,  [e_3,e_4] = \sum_{i=1}^k (\beta a_i+\gamma b_i) e_i \label{Lie.M3}
    \end{align}
    for some $a_i, b_i, \alpha, \beta,\gamma \in \mathbb F$ such that $(a_1,\ldots,a_k)$ and $(b_1,\ldots,b_k)$ are linearly independent over $\mathbb F$ and that $x^2+\gamma x - \beta$ has no root in $\mathbb F$. Furthermore, the commuting graphs of $\mathcal M_{1,k}$ and $\mathcal M_{2,k}$ are connected and both have diameter $3$, while the commuting graph of $\mathcal M_{3,k}$ is disconnected. 
\end{theorem}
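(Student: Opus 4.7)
The plan is to reduce the problem to a ``core'' subalgebra $\mathcal M$, classify $\mathcal M$ up to isomorphism by analysing the bracket pattern on a complement of the centre, and then read off connectedness and diameter from the structure constants.

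First I would peel off the abelian part of the centre. Set $Z_0 := Z(\mathcal L) \cap \mathcal L^1$ and $j := \dim Z_0$; since $\dim \mathcal L^1 = 2$, we have $j \in \{0,1,2\}$. Fix a vector-space complement $W$ of $Z_0$ in $Z(\mathcal L)$ and a complement $V$ of $Z(\mathcal L) + \mathcal L^1$ in $\mathcal L$, and put $\mathcal M := V + \mathcal L^1$. A short intersection computation gives $\dim V = 2+j$, $\dim W = n-4-j$, $\mathcal L = \mathcal M \oplus W$ as vector spaces, $\mathcal M^1 = \mathcal L^1$, and $Z(\mathcal M) = Z_0 \subseteq \mathcal M^1$. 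Since $W \subseteq Z(\mathcal L)$, this is a Lie-algebra direct sum $\mathcal L = \mathcal M \oplus \mathbb F^{n-k}$ with $k := 4+j \in \{4,5,6\}$ and $\mathcal M \in \mathrm{Lie}(k,2)$. Proposition~\ref{Prop:decomposable}(i) then identifies the connectedness and diameter of $\Gamma(\mathcal L)$ with those of $\Gamma(\mathcal M)$, so it suffices to analyse the latter.

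For the classification of $\mathcal M$, pick a basis $e_1, e_2, e_3, e_4$ of a complement of $Z(\mathcal M)$ in $\mathcal M$, extended by a basis $e_5, \ldots, e_k$ of $Z(\mathcal M)$. All six brackets $[e_i,e_j]$, $1\le i<j\le 4$, lie in the $2$-dimensional space $\mathcal M^1$; they define a skew bilinear map $\Lambda\colon V'\times V'\to \mathcal M^1$ with $V':=\langle e_1,\ldots,e_4\rangle$, and the problem becomes to classify $\Lambda$ up to the natural $GL(V')\times GL(\mathcal M^1)$ action subject to Jacobi. I would analyse this by cases on the ``support graph'' on $\{1,2,3,4\}$ whose edges are the pairs with $[e_i,e_j]\neq 0$: after normalising by basis changes, the support graph can be brought to one of three shapes --- the matching $\{\{1,2\},\{3,4\}\}$ giving $\mathcal M_{1,k}$, the path $\{\{1,2\},\{1,3\},\{3,4\}\}$ giving $\mathcal M_{2,k}$, or the $4$-cycle $\{\{1,2\},\{1,3\},\{2,4\},\{3,4\}\}$ giving $\mathcal M_{3,k}$. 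The Jacobi identities (automatic for $k=6$ since then $\mathcal M^1\subseteq Z(\mathcal M)$, but restrictive for $k\in\{4,5\}$) pin down the remaining scalars $\alpha,\beta,\gamma$, and for the third family force $x^2+\gamma x-\beta$ to be irreducible over $\mathbb F$: any root $r\in\mathbb F$ would permit a change of basis in $\langle e_1,e_3\rangle$ (with a corresponding adjustment in $\langle e_2,e_4\rangle$) killing one of the cross brackets, collapsing the $4$-cycle to a path and reducing $\mathcal M_{3,k}$ to $\mathcal M_{2,k}$.

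For the diameters I would argue directly from the structure constants. In $\mathcal M_{1,k}$, the vanishing of all cross brackets means $[u,v]=0$ whenever $u\in\langle e_1,e_2\rangle$ and $v\in\langle e_3,e_4\rangle$; splitting any vertex as $u=u_{12}+u_{34}+z_u$ along $V'=\langle e_1,e_2\rangle\oplus\langle e_3,e_4\rangle\oplus Z(\mathcal M)$, one checks $u\sim u_{12}$ whenever $u_{12}\neq 0$ and $u\sim u_{34}$ whenever $u_{34}\neq 0$, and the path $u\sim u_{12}\sim v_{34}\sim v$ (together with its obvious degeneracies when $u_{12}$ or $v_{34}$ vanishes) joins any two vertices in length at most $3$. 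The lower bound $\diam\ge 3$ comes from the witness pair $(e_1+e_3,\,e_2+e_4)$ exactly as in Remark~\ref{remark}. The argument for $\mathcal M_{2,k}$ is analogous, using $e_3$ as a bridge vertex (it commutes with both $e_1$ and $e_4$) and splitting vertices by the vanishing of their $e_2$-coordinate. For $\mathcal M_{3,k}$ I would invoke Proposition~\ref{Prop.disconnected-setS} with $\mathcal S:=\{\lambda e_1+\mu e_3+z:(\lambda,\mu)\in\mathbb F^2\setminus\{(0,0)\},\,z\in Z(\mathcal M)\}\setminus Z(\mathcal M)$: expanding $[\lambda e_1+\mu e_3,\sum x_i e_i]=0$ and eliminating yields, after using the Jacobi-forced coefficients, a quadratic relation on the cross-components of $x$ whose only solutions in $\mathbb F$ force $x\in\mathcal S\cup Z(\mathcal M)$ precisely when $x^2+\gamma x-\beta$ has no root in $\mathbb F$. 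The main obstacle is the classification step: the interplay between the $GL(V')\times GL(\mathcal M^1)$ normalisation and the Jacobi identities, and the identification of $x^2+\gamma x-\beta$ as the genuine invariant distinguishing $\mathcal M_{3,k}$ from $\mathcal M_{2,k}$, is where the bulk of the technical bookkeeping lives.
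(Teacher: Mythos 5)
Your global architecture --- peel off an abelian direct summand to get a core $\mathcal M\in\mathrm{Lie}(k,2)$ with $Z(\mathcal M)\subseteq\mathcal M^1$, classify the induced pair of alternating forms on a $4$-dimensional complement of the centre into three normal forms, then read off connectedness --- is the same as the paper's, and your reduction $\mathcal L=\mathcal M\oplus\mathbb F^{n-k}$ together with the appeal to Proposition~\ref{Prop:decomposable}(i) is correct and arguably cleaner than the paper's. But the classification step, which is the actual content of the theorem, is asserted rather than proved: the claim that the support pattern can always be normalised to a matching, a path, or a $4$-cycle, and that Jacobi then forces exactly the scalars appearing in \eqref{Lie.M2}--\eqref{Lie.M3}, is what the paper spends most of its proof on (an explicit dichotomy on whether some $e_i$ occurs in two brackets spanning $\mathcal L^1$, followed by successive basis changes reducing to $[e_1,e_4]=[e_2,e_3]=0$, $[e_2,e_4]=\alpha[e_1,e_3]$, $[e_3,e_4]=\beta[e_1,e_2]+\gamma[e_1,e_3]$, and a case split on $\alpha,\beta$ and on the roots of $x^2+\gamma x-\beta$). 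Deferring this as ``technical bookkeeping'' leaves the theorem unproved.

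The graph-theoretic part also contains two concrete errors. For $\mathcal M_{2,k}$: in \eqref{Lie.M2} one has $[e_1,e_3]=\sum b_ie_i\neq0$ and $[e_3,e_4]=\sum(a_i+\alpha b_i)e_i\neq0$, so $e_3$ commutes with neither $e_1$ nor $e_4$; the usable hub is $e_2$ (commuting with $e_3$ and $e_4$), and $u=\sum c_ie_i$ commutes with $e_2$ iff $c_1=0$. Since $\mathcal M_{2,k}$ admits no splitting into two commuting halves, the argument is not ``analogous'' to $\mathcal M_{1,k}$: the case where both endpoints have nonzero $e_1$-coordinate genuinely requires constructing an intermediate adjacent vertex and solving a $3\times4$ homogeneous system, as in the paper's Case 2.2. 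For $\mathcal M_{3,k}$: your set $\mathcal S=\{\lambda e_1+\mu e_3+z\}$ cannot witness disconnectedness via Proposition~\ref{Prop.disconnected-setS}, because $[e_1,e_3]\neq0$ (so $\mathcal S$ is not even a clique) and $\ker(\ad_{e_1})=\left< e_1,e_4\right>+Z(\mathcal M)\not\subseteq\mathcal S\cup Z(\mathcal M)$, since $[e_1,e_4]=0$ but $e_4\notin\mathcal S$. The true connected components are built on the commuting pairs $(e_1,e_4)$ and $(e_2,e_3)$, namely the sets $\left\{\lambda b_1e_1+\lambda b_2e_2+\delta b_2e_3-\delta b_1e_4+z\right\}$, and the irreducibility of $x^2+\gamma x-\beta$ enters by guaranteeing that the determinant $-\lambda^2-\gamma\lambda\delta+\beta\delta^2$ of the resulting $2\times2$ system never vanishes. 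Your intuition that a quadratic obstruction controls disconnectedness is right, but it protects a different subspace than the one you chose.
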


\begin{proof}
    Since $\dim Z(\mathcal L)=n-4$, there is a basic $\{e_1, \ldots, e_n\}$ of $\mathcal L$ such that 
    \begin{equation*}
        Z(\mathcal L) = \left< e_{5}, \ldots, e_n \right>, \text{ and } \mathcal{L}^1 = \left< [e_i, e_j]: 1\leq i< j\leq 4\right>. 
    \end{equation*}
Because $\dim \mathcal L^1=2$, we have only two possibilities for the spanning set of $\mathcal{L}^1$ as follows:
\\[5pt]
\noindent\textbf{Case 1.} There is no $i, j, k$ such that $\mathcal{L}^1 = \left< [e_i, e_j], [e_i, e_k]\right>$. In this case, by changing the indices if necessary, we may assume that $\mathcal L^1 = \left< [e_1, e_2], [e_3, e_4]\right>$ and
\begin{equation}
    [e_1, e_3] = [e_1, e_4] = [e_2, e_3]=[e_2,e_4]=0.
\end{equation}
It follows that 
\begin{align}
    & [c_1e_1+c_2e_2,d_3e_3+d_4e_4]= 0 \quad \text{for every } c_1, c_2, d_3, d_4 \in \mathbb F.
\end{align}
Therefore, for arbitrary two vertices $u=\sum_{i=1}^n c_ie_i$ and $v=\sum_{i=1}^n d_ie_i$,
\begin{itemize}
    \item $\{u,e_1+e_2,v\}$ is a path of $\Gamma(\mathcal L)$ provided that $(c_1,c_2) = (0,0)$ and $(d_1,d_2)=(0,0)$,

    \item $\{u,d_1e_1+d_2e_2,v\}$ is a path of $\Gamma(\mathcal L)$ provided that $(c_1,c_2) = (0,0)$ and $(d_1,d_2) \neq (0,0)$,

    \item $\{u,c_1e_1+c_2e_2,e_3+e_4,v\}$ is a path of $\Gamma(\mathcal L)$ provided that $(c_1,c_2) \neq  (0,0)$ and $(d_3,d_4) = (0,0)$,
    
    \item $\{u,c_1e_1+c_2e_2,d_3e_3+d_4e_4,v\}$ is a path of $\Gamma(\mathcal L)$ provided that $(c_1,c_2) \neq  (0,0)$ and $(d_3,d_4) \neq (0,0)$.
\end{itemize}
%
%
Therefore, any pair of vertices of $\Gamma(\mathcal L)$ is joined by a path of length at most $3$. Moreover, it can be seen that there is no vertex in  $\Gamma(\mathcal L)$ adjacency to both $e_1+e_3$ and $e_2+e_4$. We thus conclude that $\Gamma(\mathcal L)$ is connected and $\diam(\Gamma(\mathcal L))=3$ in this case. 

Furthermore, if we denote by $\cal H$ the subalgebra of $\mathcal L$ generated by $\{e_1, e_2, e_3, e_4, [e_1,e_2], [e_3,e_4]\}$, then $\mathcal L$ is isomorphic to $\mathcal H \oplus \mathbb F^{n-\dim \mathcal H}$. This Lie algebra $\mathcal H$ is clearly isomorphic to $\mathcal M_{1,k}$ where $k=\dim \mathcal H$. Therefore, in this case, $\mathcal L$ is isomorphic to $\mathcal M_{1,k} \oplus \mathbb F^{n-k}$, and $\dim (\Gamma(\mathcal L)) = \dim (\Gamma(\mathcal M_{1,k}) = 3.$
\\[5pt]
\noindent\textbf{Case 2.} There is $i, j, k$ such that $\mathcal{L}^1 = \left< [e_i, e_j], [e_i, e_k]\right>$. Without loss of generality, assume that $\mathcal{L}^1 = \left< [e_1, e_2], [e_1,e_3]\right>$. If so, there are $c_1, \ldots, c_6 \in \mathbb F$ such that
\begin{align}
    & [e_1,e_4]=c_1[e_1, e_2]+c_2[e_1,e_3], \\
    & [e_2,e_3]=c_3[e_1, e_2]+c_4[e_1,e_3], \\ 
    & [e_2,e_4]=c_5[e_1, e_2]+c_6[e_1,e_3].
\end{align}
Let $e_2'=e_2-c_4e_1$, $e_3'=e_3+c_3 e_1$ and $e_4'=e_4-(c_1c_4-c_5)e_1-c_1 e_2'-c_2 e_3'$. Then
\begin{align}
    [e_2',e_3'] = [e_1,e_4']=0, \text{ and }
    [e_2',e_4'] =  (c_6-c_2c_4) [e_1,e_3'].
\end{align}
Therefore, without loss of generality, we may assume that 
\begin{equation}\label{eq.55}
    [e_1,e_4]=[e_2,e_3]=0,  [e_2,e_4]=\alpha [e_1, e_3], \text{ and } [e_3,e_4]=\beta [e_1,e_2]+\gamma [e_1, e_3].
\end{equation}
for some $\alpha, \beta, \gamma$ in $\mathbb F$. We split this case into three subcases as follows:
\begin{description}
    \item[Case 2.1.] $\alpha=\beta=0$. If so, $\gamma \neq 0$ holds because $e_4 \notin Z(\mathcal L)$. By letting $e_1'=\gamma e_1+e_4$, we easily see that 
    \begin{equation*}
        [e_1',e_3]=[e_1',e_4]=0.
    \end{equation*}
    Therefore, $\mathcal L$ is isomorphic to $\mathcal M_{1,k} \oplus \mathbb F^{n-k}$. As a consequence, its commuting graph, $\Gamma(\mathcal{L})$, is connected and has diameter $3$. 
    
    \item[Case 2.2.] $\alpha = 0$, and $\beta \neq 0$. Let $u=\sum_{i=1}^n c_ie_i$ and $v=\sum_{i=1}^n d_i e_i$ be two arbitrary vertices in $\Gamma(\mathcal{L})$. 
    \begin{itemize}
        \item \textbf{Case 2.2.1.} $c_1 \neq 0$. If so, by letting 
    \begin{equation}
        u_1=(c_1\gamma)e_1 + (c_2\gamma-c_3\beta)e_2 +  c_1e_4,
    \end{equation}
    we observe that $u_1$ is an adjacent vertex of $u$ in $\Gamma(\mathcal L)$.
    Moreover, by letting $u_2=x_1e_1+x_2e_2+x_3e_3+x_4e_4 \in \mathcal L$, we get 
    \begin{equation}
        \left\{
            \begin{aligned}
                 [u_1,u_2] = 0 \\
                 {}[v,u_2] = 0
            \end{aligned}
        \right. 
        \Longleftrightarrow 
        \left\{
            \begin{aligned}
                a_1\gamma x_2 - (a_2\gamma-a_3\beta)x_1 - a_1\beta x_3 & = 0 \\
                a_1x_2-a_2x_1 + \beta (a_3x_4-a_4x_3) & = 0 \\
                a_1x_3-a_3x_1+\gamma(a_3x_4-a_4x_3) & =0
            \end{aligned}
        \right.
    \end{equation}
    The above homogeneous system must has a non-trivial solution. Therefore, there is a vertex $u_2$ in $\Gamma(\mathcal L)$ such that $\{u,u_1,u_2,v\}$ is a path. 

    \item \textbf{Case 2.2.2.} $c_1=d_1=0$. If so, we observe from the assumption $[e_2,e_3]=[e_2,e_4]=0$ in Equation~\eqref{eq.55} that $\{u, e_2,v\}$ is a path in $\Gamma(\mathcal L)$. 
    
    \item \textbf{Case 2.2.3.} $c_1=0$ and $d_1 \neq 0$. Similar to \textbf{Case 2.2.2} above, we observe that any pair of vertices, $u$ and $v$, is joined by a path of length $3$.
    \end{itemize}
    Therefore, in this \textbf{Case 2.2}, we observe that $\Gamma(\mathcal L)$ is connected and 
    \begin{equation} \label{eq.<=3}
        \diam(\Gamma(\mathcal L)) \leq 3.
    \end{equation}
    Besides, according to the structure constants given in Equation~\eqref{eq.55}, we observe that
    \begin{equation}
        \left\{
            \begin{aligned}
               & \left[ e_1,\sum_{i=1}^n c_i e_i \right] = 0 \\
               & \left[ e_3,\sum_{i=1}^n c_i e_i \right] = 0
            \end{aligned}
        \right.
        \iff 
        \left\{
            \begin{aligned}
                c_2 = c_3  = 0 \\
                c_1 = c_4 = 0
            \end{aligned}
        \right.
    \end{equation}
    Therefore, there is no vertex $w$ in $\Gamma(\mathcal L)$ such that $w$ is adjacent to both the vertices $e_1$ and $e_3$. In combining with Equation~\eqref{eq.<=3}, we get $\diam(\Gamma(\mathcal L)) =3$.

    Now, by basis change $\frac{1}{\beta}e_4 \rightarrow e_4$, we easily see that $\mathcal L$ is isomorphic to $\mathcal M_{2,k}$ where $k$ equals to the dimension of the vector space generated by  $\{e_1, e_2, e_3, e_4, [e_1,e_2], [e_1,e_3]\}$.

    \item[Case 2.3.] $\alpha \neq 0$. Since $[e_2, \frac{1}{\alpha} e_4]=\frac{1}{\alpha}[e_2, e_4] = [e_1,e_3]$, we may assume (after a change of basis if necessary) that $\alpha=1$. 
    \begin{itemize}
        \item \textbf{Case 2.3.1.} The quadratic $x^2+\gamma x-\beta$ has no root in $\mathbb F$.
        Let $b_1,\, b_2$ be two elements in $\mathbb F$ such that $(b_1,b_2) \neq (0,0)$ and let $V_{b_1,b_2}$ be the set of all vertices in $\Gamma(\mathcal L)$ of the form $\lambda b_1 e_1 + \lambda b_2 e_2 + \delta b_2e_3-\delta b_1e_4 +\sum_{i=5} b_i e_i$ where $(0,0) \neq (\lambda,\delta) \in \mathbb F^2$ and $b_5, \ldots, b_n$ in $\mathbb F$. That is, 
        \begin{equation}
            V_{b_1,b_2}=\left\{\lambda b_1 e_1 + \lambda b_2 e_2 + \delta b_2e_3-\delta b_1e_4 +\sum_{i=5} b_i e_i: (\lambda,\delta) \neq (0,0) \right\} \subseteq V_{\cal L}.
        \end{equation}
        We will prove that the subgraph of $\Gamma(\cal L)$ induced by $V_{b_1,b_2}$, denoted by $\Gamma(V_{b_1,b_2})$,  is a connected component of $\Gamma(\mathcal L)$. 
        Indeed, let $u=\sum_{i=1}^n a_i e_i$ and $v=\lambda b_1 e_1 + \lambda b_2 e_2 + \delta b_2e_3-\delta b_1e_4 +\sum_{i=5}^n b_i e_i$ be two arbitrary vertices in $\Gamma(\mathcal L)$ and $V_{b_1, b_2}$, respectively. Then 
        \begin{equation}
            [u,v]=0 \iff \left\{
                \begin{aligned}
                    & \lambda (a_1b_2-a_2b_1)-\beta\delta (a_3b_1+a_4b_2) =0 \\
                    & \delta a_1b_2-\lambda a_3b_1-\delta a_2b_1-\lambda a_4b_2-\gamma\delta (a_3b_1+a_4b_2) =0
                \end{aligned}
            \right.
        \end{equation}
        It follows that
        \begin{equation}\label{eq:system}
            [u,v]=0 \iff \left\{
                \begin{aligned}
                    & \lambda(a_1b_2-a_2b_1)-\beta\delta(a_3b_1+a_4b_2) =0 \\
                    & \delta(a_1b_2-a_2b_1)-(\gamma\delta+\lambda)(a_3b_1+a_4b_2) =0
                \end{aligned}
            \right.
        \end{equation}
        It is elementary to check that 
        \begin{equation}\label{eq:condi}
            -\lambda(\gamma\delta+\lambda) + \beta\delta^2 = -\lambda^2-\gamma(\lambda\delta)+\beta\delta^2.
        \end{equation}
        Since $x^2+\gamma x-\beta$ has no root in $\mathbb F$, Equation~\eqref{eq:condi} implies that $-\lambda(\gamma\delta+\lambda) + \beta\delta^2 \neq 0$. Thus, we observe from Equation~\eqref{eq:system} that
        \begin{equation}
            [u,v]=0 \iff a_1b_2 - a_2b_1=a_3b_1+a_4b_2=0.
        \end{equation}
        Therefore, $u$ is adjacent to a vertex in $\Gamma(V_{b_1,b_2})$ if and only if $u \in V_{b_1,b_2}$. In other words, the induced subgraph of $\Gamma(\mathcal L)$ generated by $V_{b_1,b_2}$ is a connected component of $\Gamma(\mathcal L)$. It follows that $\Gamma(\mathcal L)$ is disconnected. Moreover, from the above proof, we may see that each connected component is a complete graph. In the case where $\mathbb F$ is a finite field, then the number of connected components is $(|\mathbb F|+1)$ and each connected component is a complete graph of $(|\mathbb F|^2-1)|\mathbb F|^{n-4}$ vertices.
        
        Note that this Lie algebra is isomorphic to $\mathcal M_{3,k} \oplus \mathbb F^{n-k}$, where $k$ equals to the dimension of the vector space generated by $\{e_1, e_2, e_3, e_4, [e_1, e_2], [e_1, e_3]\}$.

        \item \textbf{Case 2.3.2.} The quadratic $x^2+\gamma x-\beta$ has a root $\zeta$ in $\mathbb F$. In this case, $\beta=\zeta(\gamma+\zeta)$. Therefore, 
        \begin{align}
            & [e_3+\zeta e_2,e_4+(\gamma+\zeta)e_1] \notag\\ 
            = & [e_3,e_4]+\zeta[e_2,e_4]-(\gamma+\zeta)[e_1,e_3]-\zeta(\gamma+\zeta)[e_1,e_2] \notag\\
            = &  \beta[e_1,e_2] + \gamma[e_1,e_3] + \zeta[e_1,e_3]-(\gamma+\zeta)[e_1,e_3]-\zeta(\gamma+\zeta)[e_1,e_2] =0.
        \end{align}
        Moreover, we may check directly the following equality:
        \begin{equation}
            [e_2,e_4+(\gamma+\zeta)e_1] = [e_1, e_3+\zeta e_2]  - (\gamma + 2\zeta) [e_1, e_2].
        \end{equation}
        Thus, this Lie algebra is isomorphic to $\mathcal M_{2,k}\oplus\mathbb F^{n-k}$. Therefore, the commuting graph of Lie algebras in this case is connected and has diameter $3$.
    \end{itemize}
        


\end{description} 
\end{proof}

\begin{remark}  
In the case where the characteristic of $\mathbb F$ is different from $2$, the polynomial $x^2+\gamma x - \beta$ has no root in $\mathbb F$ if and only if $\gamma^2+4\beta$ is not a square in $\mathbb F$. Otherwise, $x^2+\gamma x-\beta$ does not have root in a field $\mathbb F$ of characteristic $2$ if and only if $\gamma\neq 0$ and $\dfrac{-\beta}{\gamma} + \left(\dfrac{-\beta}{\gamma}\right)^2 + \cdots + \left(\dfrac{-\beta}{\gamma}\right)^{2^{n-1}}=1,$ where $n = [\mathbb F:\mathbb F_2]$ is the degree of extension of the field of $\mathbb F$ over $\mathbb F_2$, the field of 2 elements \cite[Corollary 3.79]{Lidl1987}. 
\end{remark}

\section{Diameters of Commuting graphs of Lie algebras of small dimension}\label{sec:<=5}

In this section, we will use results in previous sections, especially Process in Section~\ref{sec:algorithm} to study the diameters of the commuting graphs of all real solvable Lie algebras of dimension up to 4. These Lie algebras are classified in \cite[Chapters 16, 17, 18]{Snob}. Note that these results are applied similarly to the complex case. Recall that the commuting graph of any Lie algebra of dimension $\leq 3$ is disconnected (see Corollary \ref{cor.dim<=3}). 
\begin{theorem}
    Let $\cal L$ be a solvable Lie algebra of dimension 4 over $\mathbb R$. Then $\Gamma(\mathcal L)$ is connected if and only if $\mathcal L$ is isomorphic to one of the following Lie algebras: $\mathfrak n_{2,1} \oplus \mathfrak n_{2,1}$ and $\mathfrak s_{4,11}$, where $\mathfrak n_{2,1}$ is defined in Equation~\eqref{eq.n21}, and $\mathfrak s_{4,11}$ is defined as follows:
    \begin{equation}\label{eq.s411}
        \mathfrak s_{4,11} = \left< e_1, e_2, e_3, e_4 \right> \colon [e_2,e_3]=e_1, \, [e_4,e_1] = e_1, \, [e_4,e_2]=e_2.
    \end{equation}
    Moreover, $\diam\left(\Gamma(\mathfrak n_{2,1} \oplus \mathfrak n_{2,1}) \right) = \diam \left(\Gamma(\mathfrak s_{4,11})\right) = 3.$
\end{theorem}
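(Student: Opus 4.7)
The plan is to traverse the Snobl classification of $4$-dimensional real solvable Lie algebras \cite[Chapters 16--18]{Snob} and dispatch each isomorphism class using the machinery built in Sections~\ref{section:general}--\ref{sec:one-two-dimension}, plus Process~\ref{algorithm} for the residual cases. I would first split the classification into decomposable and indecomposable Lie algebras.

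For the decomposable ones, the dimension constraint leaves only two possibilities: either $\mathcal{L}=\mathcal{L}_1\oplus \mathbb{R}^{k}$ with $\mathcal{L}_1$ a noncommutative Lie algebra of dimension at most $3$, or $\mathcal{L}=\mathfrak{n}_{2,1}\oplus \mathfrak{n}_{2,1}$, since $\mathfrak{n}_{2,1}$ is the only noncommutative Lie algebra of dimension $\leq 3$ that is itself indecomposable and since $1+3$ would force the second summand to contain $\mathfrak{h}_3$, which has a nontrivial center (hence that decomposition collapses into the first case). In the first case, Proposition~\ref{Prop:decomposable}(i) reduces connectedness to $\Gamma(\mathcal{L}_1)$, which is disconnected by Corollary~\ref{cor.dim<=3}. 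In the second case, Proposition~\ref{Prop:decomposable}(ii) gives connectedness with $\diam\leq 3$, and Remark~\ref{remark} (applied to $\mathcal{L}$ written in the form of Example~\ref{example:n21}) provides the matching lower bound $\diam\geq 3$.

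For the indecomposable $4$-dimensional real solvable Lie algebras, I would stratify by $d:=\dim\mathcal{L}^1$ and use:
\begin{itemize}
    \item[(a)] $d=1$: Theorem~\ref{theorem:Lie(n,1)} says the only indecomposable members of $\mathrm{Lie}(n,1)$ with connected commuting graph are the odd-dimensional Heisenberg algebras $\mathfrak{h}_{2m+1}$, so none occur here.
    \item[(b)] $d=2$: Apply Theorem~\ref{theorem:Lie(n,2)-classification} to match each algebra against the normal forms $\mathcal{M}_{1,k},\mathcal{M}_{2,k},\mathcal{M}_{3,k}$; the first two give connected commuting graphs of diameter $3$, while $\mathcal{M}_{3,k}$ is disconnected. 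A direct inspection of Snobl's list (combined with Proposition~\ref{Prop.disconnected} in centers-too-large cases) must show that no $4$-dimensional indecomposable solvable real Lie algebra realizes $\mathcal{M}_{1,4}$ or $\mathcal{M}_{2,4}$; the key point is that the forms $\mathcal{M}_{1,4},\mathcal{M}_{2,4}$ require $\dim Z(\mathcal{L})=0$, which rules them out once one checks that each Snobl candidate with $d=2$ either has nontrivial center or yields $\mathcal{M}_{3,4}$ after normalisation.
    \item[(c)] $d=3$: Run Process~\ref{algorithm} on each remaining algebra. The algebra $\mathfrak{s}_{4,11}$ is precisely Example~\ref{examplen411}, which already establishes $\diam(\Gamma(\mathfrak{s}_{4,11}))=3$. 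For the other $d=3$ algebras (e.g., those with a diagonal action of $e_4$ on $\langle e_1,e_2,e_3\rangle$), I would exhibit a subset $\mathcal{S}\subsetneq V_{\mathcal{L}}$ closed under $\ker(\ad_x)$ (typically a common eigenspace of the adjoint action) and invoke Proposition~\ref{Prop.disconnected-setS}, or cite Corollary~\ref{coro1} when $\dim\ker(\ad_x)=\dim Z(\mathcal{L})+1$ for some $x$.
\end{itemize}

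The main obstacle is the sheer case analysis in step (c): roughly a dozen indecomposable $4$-dimensional solvable Lie algebras must each be shown either disconnected or conjugate to $\mathfrak{s}_{4,11}$. For each algebra one must compute a basis of $\ker(\ad_x)$ and either identify an invariant disconnecting set (making Proposition~\ref{Prop.disconnected-setS} applicable) or run the iteration $N^0(s)\subsetneq N^1(s)\subsetneq\cdots$ to exhaustion. The borderline case is $\mathfrak{s}_{4,11}$, where the action of $e_4$ is non-diagonal enough that the neighbourhood chain fills all of $V_{\mathcal{L}}$ at $N^2(e_1)$, giving connectedness; the sharp lower bound $\diam\geq 3$ is then obtained by verifying directly, as in Example~\ref{examplen411}, that no single vertex is simultaneously adjacent to $e_2$ and $e_4$.
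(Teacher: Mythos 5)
Your overall plan is the paper's plan (traverse Šnobl's list, dispatch the decomposable case with Proposition~\ref{Prop:decomposable} and Remark~\ref{remark}, kill the indecomposable cases with Proposition~\ref{Prop.disconnected}, Corollary~\ref{coro1} or Proposition~\ref{Prop.disconnected-setS}, and handle the survivor with Process~\ref{algorithm}), and the decomposable half is fine. But your stratification by $d=\dim\mathcal L^1$ contains a concrete error in step (b): the claim that no $4$-dimensional indecomposable solvable real Lie algebra realizes $\mathcal M_{2,4}$ is false. The algebra $\mathfrak s_{4,11}$ itself has derived algebra $\left<e_1,e_2\right>$ of dimension $2$ (not $3$, so it does not belong to your stratum (c)) and trivial center, hence falls under Theorem~\ref{theorem:Lie(n,2)-classification}; setting $f_1=e_4$, $f_2=e_1$, $f_3=e_2$, $f_4=e_3$ gives $[f_1,f_2]=f_2$, $[f_1,f_3]=f_3$, $[f_3,f_4]=f_2$ and all other brackets zero, which is exactly $\mathcal M_{2,4}$ with $a=(0,1,0,0)$, $b=(0,0,1,0)$, $\alpha=0$. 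Taken literally, your step (b) therefore concludes that $\Gamma(\mathfrak s_{4,11})$ is disconnected (as an instance of $\mathcal M_{3,4}$), which contradicts your own step (c) and the theorem. The argument is internally inconsistent as written, even though you happen to reach the correct verdict for $\mathfrak s_{4,11}$ by also processing it (in the wrong bucket) via Example~\ref{examplen411}.

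The repair is easy and in fact shortens the proof relative to the paper: in stratum $d=2$ with trivial center there are exactly two indecomposable algebras, $\mathfrak s_{4,11}\cong\mathcal M_{2,4}$ (connected, diameter $3$ directly from Theorem~\ref{theorem:Lie(n,2)-classification}, no need to rerun the Process) and $\mathfrak s_{4,12}\cong\mathcal M_{3,4}$ (the quadratic $x^2+\gamma x-\beta$ becomes $x^2+1$, rootless over $\mathbb R$, hence disconnected); the $d=2$ algebras with nontrivial center ($\mathfrak n_{4,1}$, $\mathfrak s_{4,1}$) are disconnected by Proposition~\ref{Prop.disconnected}. Your stratum (c) then consists only of the genuinely $3$-dimensional-derived-algebra families $\mathfrak s_{4,2}$--$\mathfrak s_{4,10}$ (minus those with nontrivial center), each of which is handled by Corollary~\ref{coro1} applied to $x=e_4$, exactly as in the paper's Table~\ref{table:1.1}. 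You should also state the lower bound $\diam(\Gamma(\mathfrak n_{2,1}\oplus\mathfrak n_{2,1}))\geq 3$ explicitly via the pair $e_1+e_3$, $e_2+e_4$ of Remark~\ref{remark}, which you do cite; that part is fine.
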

\begin{proof} 
    According to Proposition~\ref{Prop.disconnected}, we observe that $\Gamma(\mathcal L)$ is disconnected if $\dim(Z(\mathcal L)) \geq 1$. Therefore, in the case where $\mathcal L$ is decomposable, the commuting graph of $\mathcal L$ is connected only if $\mathcal{L}$ is isomorphic to $\mathcal L_1 \oplus \mathcal L_2$ where $\mathcal L_1$ and $\mathcal L_2$ are two noncommutative subalgebras of $\mathcal L$ satisfying $\dim \mathcal L_1 = \dim \mathcal L_2 =2$. In this case, $\Gamma(\mathcal L)$ is connected, and $\diam(\Gamma(\mathcal L))=3$ (see Remark~\ref{remark}). Note that any noncommutative Lie algebra of dimension 2 is isomorphic to the Lie algebra $\mathfrak n_{2,1}$ defined in Equation~\eqref{eq.n21}. Therefore, in the case where $\mathcal L$ is decomposable, $\mathcal L$ is connected if and only if $\mathcal L$ is isomorphic to $\mathfrak n_{2,1} \oplus \mathfrak n_{2,1}$.

    Also, if $\mathcal L$ is indecomposable, then $\mathcal L$ must be isomorphic to one of the nonisomorphic $13$ families: $\mathfrak n_{4,1}$, $\mathfrak s_{4,1}, \ldots, \mathfrak s_{4,12}$ (see \cite[Chapter 17]{Snob}). Note that the families $\mathfrak n_{4,1}, \, \mathfrak s_{4,1}$, $\mathfrak s_{4,6}$, and $\mathfrak s_{4,7}$ have a non-trivial center. Therefore, the commuting graphs of $\mathfrak n_{4,1}, \, \mathfrak s_{4,1}$, $\mathfrak s_{4,6}$, and $\mathfrak s_{4,7}$ are disconnected. These families of Lie algebras and their centers are given in Table~\ref{table:1.2}.

    \begin{table}[!ht]
\centering
{\small
\caption{4-dimensional solvable Lie algebras having non-trivial centers.}
\label{table:1.2}
 \begin{tabular}{c l c c c} 
 \hline
 $\mathcal L$ & Non-trivial Lie brackets & $Z(\mathcal L)$ & $\dim \mathcal L-\dim Z(\mathcal L)$ & Connectedness\\ 
 \hline
$\mathfrak n_{4,1}$ & $[e_2,e_4]=e_1, [e_3,e_4]=e_2$ &  $\left< e_1\right>$ & $3$ & Disconnected
\\
$\mathfrak s_{4,1}$ & $[e_4,e_2]=e_1, [e_4,e_3]=e_3$ &  $\left< e_1 \right>$ & $3$ & Disconnected
 \\
 $\mathfrak s_{4,6}$ & $[e_2,e_3]=e_1, [e_4,e_2]=e_2, [e_4,e_3]=-e_3$ & $\left< e_1 \right>$ & $3$ & Disconnected
 \\
 $\mathfrak s_{4,7}$ & $[e_2,e_3]=e_1, [e_4,e_2]=-e_3, [e_4,e_3]=e_2$ & $\left< e_1 \right>$ & $3$ & Disconnected
\\
 \hline
 \end{tabular}  }
\end{table}
    
    Now, we will show that each of the families $\mathfrak s_{4,2},\, \mathfrak s_{4,3},\, \mathfrak s_{4,4},\, \mathfrak s_{4,5},\, \mathfrak s_{4,8},\, \mathfrak s_{4,9},\, \mathfrak s_{4,10}$ is disconnected by using Corollary~\ref{coro1}. These families are defined in Table~\ref{table:1.1}. 
    Let us first consider the algebra $\mathfrak s_{4,2}$. In this algebra, we easily see that 
    \begin{equation}
        \begin{array}{ll}
            [x_1e_1+x_2e_2+x_3e_3+x_4e_4,e_4] = 0 & \iff -x_1e_1-x_2(e_1+e_2)-x_3(e_2+e_3)=0 \\
            & \iff x_1=x_2=x_3=0.
        \end{array} 
    \end{equation}
    Therefore, $\ker (\ad_{e_4}) = \left< e_4\right>$. It follows that $\dim \left(\ker (\ad_{e_4})\right)=1= \dim Z(\mathfrak s_{4,2})+1$. Applying Corollary~\ref{coro1}, the commuting graph of $\mathfrak s_{4,2}$ is disconnected. Using the same method, we may show that, for a Lie algebra $\mathcal L$ in $\mathfrak s_{4,2},\, \mathfrak s_{4,3},\, \mathfrak s_{4,4},\, \mathfrak s_{4,5},\, \mathfrak s_{4,8}, \, \mathfrak s_{4,9},\, \mathfrak s_{4,10}$, there is a nonzero vector $x \in \mathcal L$ or a vertex $x \in V_{\mathcal L}$ such that $\dim \left(\ker (\ad_x)\right) = \dim Z(\mathcal L)+1$ (see Table~\ref{table:1.1}), and thus the commuting graph of each Lie algebra in $\mathfrak s_{4,2}$, $\mathfrak s_{4,3}$, $\mathfrak s_{4,4}$, $\mathfrak s_{4,5}$, $\mathfrak s_{4,8},$ $\mathfrak s_{4,9},$ $\mathfrak s_{4,10}$, is disconnected.

\begin{table}[!ht]
\centering
{\small
\caption{Solvable Lie algebras, $\mathcal L$, of dimension $4$ satisfying $\dim \left(\ker (\ad_x)\right)=\dim Z(\mathcal L)+1$ for some $x\in V_{\mathcal L}$. Here, all Lie algebras are assumed to be spanned by $\{e_1, e_2, e_3, e_4\}$.}
\label{table:1.1}
 \begin{tabular}{c l l l l l} 
 \hline
$\mathcal L$ & Non-trivial Lie brackets & $x$ & $\ker (\ad_x)$ & Connectedness\\ 
 \hline
$\mathfrak s_{4,2}$ & $
\left\{\begin{aligned}
    & [e_4,e_1]=e_1, [e_4,e_2]=e_1+e_2, \\
    & [e_4,e_3]=e_2+e_3
\end{aligned}\right.
$ & $e_4$ & $\left< e_4 \right>$ & Disconnected
 \\
 $\mathfrak s_{4,3}$ & $\left\{\begin{aligned}
 & [e_4,e_1]=e_1, [e_4,e_2]=\alpha e_2, [e_4,e_3]=\beta e_3 \\
 & \left(0<|\beta|\leq |\alpha| \leq 1, (\alpha,\beta) \neq (-1,-1) \right)
 \end{aligned}\right.$ &  $e_4$ & $\left< e_4 \right>$ & Disconnected
 \\
 $\mathfrak s_{4,4}$ & $\left\{\begin{aligned} & [e_4,e_1]=e_1, [e_4,e_2]=e_1+e_2, \\ & [e_4,e_3]=\alpha e_3 \quad (\alpha \neq 0)\end{aligned}\right.$ &  $e_4$ & $\left< e_4 \right>$ & Disconnected
 \\
 $\mathfrak s_{4,5}$ & $\left\{\begin{aligned} & [e_4,e_1]=\alpha e_1, [e_4,e_2]=\beta e_2-e_3, \\ & [e_4,e_3]=e_2+\beta e_3 \quad (\alpha > 0)\end{aligned}\right.$ & $e_4$ & $\left< e_4 \right>$ & Disconnected
 \\
 $\mathfrak s_{4,8}$ & $\left\{\begin{aligned} & [e_2,e_3]=e_1, [e_4,e_1]=(1+\alpha)e_1, [e_4,e_2]=e_2, \\ & [e_4,e_3]=\alpha e_3  \quad (-1< \alpha \leq 1)\end{aligned}\right.$& $e_4$ & $\left< e_4 \right>$ & Disconnected
 \\
$\mathfrak s_{4,9}$ & $\left\{\begin{aligned} & [e_2,e_3]=e_1, [e_4,e_1]=2\alpha e_1, [e_4,e_2]=\alpha e_2-e_3,\\ & [e_4,e_3]=e_2+\alpha e_3 \quad (\alpha >0)\end{aligned}\right.$ & $e_4$ & $\left< e_4 \right>$ & Disconnected
 \\
 $\mathfrak s_{4,10}$ &  $\left\{\begin{aligned} & [e_2,e_3]=e_1, [e_4,e_1]=2e_1, \\ & [e_4,e_2]=e_2, [e_4,e_3]=e_2+e_3\end{aligned}\right.$ & $e_4$ & $\left< e_4 \right>$ & Disconnected
 \\
 \hline
 \end{tabular}  }
\end{table}
    
    For the family $\mathfrak s_{4,11}$, a calculation similar to Example~\ref{examplen411} gives us the series of closed neighborhoods of $e_4$ in $\Gamma(\mathfrak s_{4,11})$ as presented in Table~\ref{table:1}. This series satisfies the equality $N^3(e_4)=\Gamma(\mathfrak s_{4,11})$. It follows that $\Gamma(\mathfrak s_{4,11})$ is connected and $\diam(\Gamma(\mathfrak s_{4,11})) \leq 6$ (see Theorem~\ref{theorem:diam<d}). Using the same argument as in Example~\ref{examplen411}, the equations~\eqref{eq.ex19.>=3} and \eqref{eq.ex19.<=3} both hold for the case where $\mathbb F=\mathbb R$ also. Therefore, $\diam(\Gamma(\mathfrak s_{4,11}))=3$. 
    
    Finally, for the family $\mathfrak s_{4,12}$, one can check that
    \begin{equation}
        [e_1,x_1e_1+x_2e_2+x_3e_3+x_4e_4] = 0 \iff -x_3e_1+x_4e_2=0 \iff x_3=x_4=0,
    \end{equation}
    and 
    \begin{equation}
        [a_1e_1+a_2e_2,x_1e_1+x_2e_2+x_3e_3+x_4e_4] = 0 \iff 
        \left\{ 
            \begin{array}{ll}
                a_1x_3+a_2x_4=0\\
                a_1x_4-a_2x_3=0
            \end{array}
        \right. \iff x_3=x_4=0,
    \end{equation}
    for every $(0,0) \neq (a_1,a_2) \in \mathbb R^2$.
    Therefore, 
    \begin{align}
        & N^1(e_1) = \left< e_1, e_2 \right> \setminus \{0\},\\
        & N^2(e_1) = \left<e_1,e_2\right> \setminus \{0\} = N^1(e_1).
    \end{align}
    By Theorem~\ref{theorem:diam<d}, the commuting graph of $\mathfrak s_{4,12}$ is disconneted. This completes the proof.
\begin{table}[!ht]
\centering
{\small
\caption{A series of closed neighborhoods of $\mathfrak s_{4,11}$ and $\mathfrak s_{4,12}$. Here, we denote by $\{e_1, e_2, e_3, e_4\}$ a basis for those Lie algebras, and by $U^*:=\{x\in U\mid x \neq \mathbf 0_U\}$ the set of non-zero elements of a vector space $U$.}
\label{table:1}
 \begin{tabular}{c l l l l l} 
 \hline
$\cal L$ & Non-trivial Lie brackets & $S$ & $N^1(S)$ & $N^2(S)$ & $N^3(S)$\\ 
 \hline
$\mathfrak s_{4,11}$ &  $\left\{\begin{aligned} & [e_2,e_3]=e_1, [e_4,e_1]=e_1, \\
& [e_4,e_2]=e_2 \end{aligned}\right.$ & $e_4$ & $\left<e_3,e_4\right>^*$ &  $\left<e_1, e_3,e_4 \right>^*$ & $\left< e_1,e_2,e_3,e_4\right>^* = V_{\mathfrak s_{4,11}}$\\
\hline
$\mathfrak s_{4,12}$ & $\left\{\begin{aligned} & [e_3,e_1]=e_1, [e_3,e_2]=e_2 \\ & [e_4,e_1]=-e_2, [e_4,e_2]=e_1 \end{aligned}\right.$ & $e_1$ & $\left<e_1,e_2\right>^*$ & $\left<e_1,e_2\right>^* \subsetneq V_{\mathfrak s_{4,12}}$
\\
 \hline
 \end{tabular}  }
\end{table}

\end{proof}

\section{Conclusion}
This study has explored several interesting properties related to the connectedness and diameter of the commuting graph of a general Lie algebra over any field. Because the diameter of the commuting graph of a decomposable Lie algebra can be easily computed via its subalgebras, it is sufficient to investigate only indecomposable Lie algebras. For indecomposable Lie algebras, we obtained an equivalent condition for these Lie algebras to have connected commuting graphs and proposed a process to determine an upper bound for their diameters. To demonstrate the theory as well as the process, we have determined the connectedness of: (1) the families of all Lie algebras having one- or two-dimensional derived algebras, and (2) real solvable Lie algebras of dimension up to 4. 
The diameters of these Lie algebras were also computed. Therefore, it is worth extending these results to all Lie algebras and using their commuting graphs to obtain a new classification of Lie algebras. Besides, if $\mathbb F$ is finite then the commuting graph of any finite-dimensional Lie algebra over $\mathbb F$ is finite which thus implies that the process \ref{algorithm} terminates for this case. To close, we propose a conjecture that this result is still valid if $\mathbb F$ is either the complex or the real fields. 

\section*{Acknowledgements}
This research is supported by the project B2024-SPD-07. 

\end{document}